\newtheorem{theorem}{Theorem}
\newtheorem*{definition}{Definition}
\newtheorem{lemma}{Lemma}
\newtheorem{proposition}{Proposition}
\newcommand{\N}{\mathbb N}  
\newcommand{\R}{\mathbb R}
\newcommand{\J}{\mathcal{J}}
\newcommand{\E}{{\mathds{E}}}
\newcommand{\prob}{\mathds{P}}
\newcommand{\abs}[1]{\left| #1 \right|} 
\newcommand{\norm}[1]{\| #1 \|} 
\newcommand{\pare}[1]{\left( #1 \right)} 
\newcommand{\set}[1]{{\left\{ #1 \right\}}} 
\newcommand{\corch}[1]{\left[ #1 \right]} 
\newcommand{\tq}{\;\ifnum\currentgrouptype=16 \middle\fi|\;}
\newcommand{\ts}{\textsuperscript}
\def\blfootnote{\gdef\@thefnmark{}\@footnotetext}
\title{
Poisson genericity in  numeration systems \\with exponentially mixing probabilities 
\blfootnote{This manuscript has been accepted for publication in  \emph{Transactions of the American Mathematical Society}.}}
 \author{
 Nicolás Álvarez \and
 Verónica Becher \and
 Eda Cesaratto \and
 Martín Mereb \and
 Yuval Peres \and
 Benjamin Weiss
}
\begin{document}

\maketitle

\begin{abstract}
We  define Poisson genericity  for  infinite sequences in 
any  countable alphabet  
with an invariant  
exponentially $\psi$-mixing probability measure.
A sequence is Poisson generic if the number of occurrences of blocks of symbols asymptotically follows a Poisson law as the block length increases.
We prove that almost all sequences are  Poisson generic. Our result generalizes Peres and Weiss' theorem about Poisson genericity of integer bases numeration systems. 
In particular, we obtain that the continued fraction expansions of almost all real numbers are Poisson generic.  
 \end{abstract}

\section{Introduction and statement of results}

Several years ago Zeev Rudnick defined  the notion of Poisson genericity for real numbers: 
a real number is Poisson generic for an integer base $b$ greater than or equal to $2$  if in its  base-$b$ expansion the number of occurrences of blocks of digits follows the Poisson distribution. 
Peres and Weiss gave a metric result showing that almost all real numbers, in the sense of Lebesgue measure, are Poisson generic, see~\cite{weiss2020,abm}.  A construction of a Poisson generic sequence for base $2$ appears in \cite{BH}.

In this note we  define Poisson genericity  for 
 infinite sequences in 
any  countable alphabet (finite or countably infinite)
with respect to an invariant probability measure that is exponentially-mixing.
A sequence is Poisson generic if the number of occurrences of blocks of symbols in the sequence converges to the Poisson law.
We prove that Poisson genericity holds with probability~$1$. 
Our initial motivation was to prove that almost all real numbers have Poisson generic 
continued fractions by extending the methods developed in~\cite{weiss2020,abm}. 
Theorem \ref{thm:main}
not only establishes this for continued fractions but also applies to any numeration system with 
invariant and exponentially $\psi$-mixing probability measure.

Let $\Omega $ be a countable alphabet with at least two symbols. For each positive integer~$k$, the set $\Omega^k$  consists of all words of length $k$ over the alphabet  $\Omega$.
 We write $\Omega^*$ for the set of all finite words and $\Omega^{\mathbb N}$ for the set of one-sided infinite words.

We write $w=w_1\cdots w_k$ for words in $\Omega^k$ and we use the letter $x$ for infinite words in $\Omega^\mathbb N$. 
For a word~$w$, $|w|$  is its length. 
We number the positions in words and  infinite sequences  starting from~$1$ 
and  we write $w[l,r]$ for the sub-sequence of~$w$  that begins in position~$l$ and ends in position~$r$. 
We use interval notation, with a square bracket when the set of integers includes 
the endpoint and a parenthesis to indicate that the endpoint is not included. The same convention is used for  $x\in \Omega^\mathbb N$.  
Given a $k \in \N$ and  a word $w$ of length $k$, the subset  $C(w)$  of $\Omega^{\mathbb N}$ defined by 
\[C_k(w)=\{x\in \Omega^{\mathbb N}: x[1,k]=w\}\]
is called the cylinder of $w$. 
All possible cylinders of any length generate a sigma-algebra $\mathcal B$. 
Finally, we assume that a measure $\mu$ is defined on the sigma-algebra $\mathcal B$ so that $(\Omega^{\mathbb N} ,\mathcal B,\mu )$ is a probability space.
 For every $k\in \mathbb N$, the projection of $\mu$ over the first $k$ coordinates is a measure on $\Omega^k$ that we denote by $\mu_k$. To shorten notation,  for a word $w\in \Omega^k$ we write 
\[
\mu_k(w)=\mu(C_k(w))
\]
and  for  $W \subseteq \Omega^k$ we  write
\[
\mu_k(W) = \sum_{w \in W} \mu_k(w).
\]
For  $j\in\N$, $x\in \Omega^{\N}$,  $k\in\N$ and $w \in \Omega^{k}$,   
we  write $I_j(x,w)$ for  the indicator function  that the word $w$ occurs in the sequence  $x$ starting at position~$j$,
\[
I_j(x,w)= {\mathbbm 1}_{ x[j, j+k) = w }.
\]
For each $k\in \N$, we define 
on the  space  $\Omega^{\mathbb N}\times\Omega^k$
with measure $\mu \times\mu_k$ 
the integer-valued  random measure 
$M_k=M_k(x,w)$ on  $\R^+=[0,+\infty)$ by
\[
M_k(x,w) (S)= \sum_{j :\, j  \mu_k(w) \in S} I_j(x, w),\quad \text{for any Borel set } S\subseteq \mathbb R^+ .
\]
We also define, for each $x \in \Omega^\N$ and $k \in \N$, the 
integer-valued random measure $M_k^x$ as 
\[
M_k^x(w)(S) = M_k(x,w)(S)
\quad \text{for any Borel set } S\subseteq \mathbb R^+. 
\]
Similarly, for each $k\in \mathbb N$ and each fixed $w\in \Omega^k$, we have the integer-random measure given by
\[
M_k^w(x)(S) = M_k(x,w)(S)
\quad \text{for any Borel set } S\subseteq \mathbb R^+.
\]

A  point process $Y(\cdot)$ on  $\R^+$ is an integer-valued random measure.
Therefore, $M_k(\cdot)$ and $M_k^{x}(\cdot)$ are point processes on  $\R^+$ for each $k\geq 1.$
The Poisson point process on~$\R^+$ 
is a point process $Y(\cdot)$ on $\R^+$  such that the following two conditions hold:
(a) 
for all disjoint Borel sets $S_1, \ldots, S_m$ included in $\R^+$, the random variables $Y(S_1),\ldots , Y(S_m)$ are mutually independent; and (b)  for each bounded Borel set $S\subseteq \R^+$, $Y(S)$  has the distribution of a  Poisson random variable with parameter equal to the Lebesgue measure of~$S$.
A sequence $\pare{Y_k(\cdot) }_{k\geq 1}$ of point processes converges in distribution to a point process $Y(\cdot)$  if, for every Borel set $S$, the random variables $Y_k(S)$ converge in distribution to $Y(S)$ as $k$ goes to infinity. 
A thorough presentation  on Poisson point processes can be read from~\cite{kingman} or~\cite{last}.

We write $p(\lambda,j)$ to denote the Poisson mass function  
 $e^{-\lambda} \lambda^j / j!$.

\begin{definition}[Poisson genericity]
We say that $x \in \Omega^\N$ is Poisson generic
if  the sequence 
$\pare{M_k^x(.)}_{k \in \N}$ 
of point processes on $\R^+$ 
converges in distribution to a Poisson point process on $\R^+$, as $k$ goes to infinity.
This means that  for  every Borel set $S\subseteq \R^+$, every integer $j\geq 0$,
\[
\mu_k(\{w\in \Omega^k: M_k^x(w)(S)=j\})\to p(|S|,j), \text{ as $k\to\infty$.}
\]
\end{definition}

We state Theorem~\ref{thm:main}, our main result, under the following assumptions on the probability measure $\mu$: 
\bigskip

\textbf{Exponentially $\psi$-mixing}: For each  $k  \in \mathbb N$  and $\ell\in \mathbb N$ such that $1\le k< \ell$ or $\ell=\infty$, consider the sigma-algebra $\mathcal B_{k,\ell}$ generated by the sets  
\[\{x\in \Omega^\mathbb N: I_k(x,w)=1\} \]
where $w$ is any word in $\Omega^{\ell-k}$ for finite $\ell$ or in $\Omega^\star$ for $\ell=\infty$. 
The measure $\mu$ is exponentially $\psi$-mixing if there are constants $\sigma \in (0,1)$ and $T>0$ so that for every  $k,\ell\in \N$  where  $ \ell >k $, for every $ A \in {\mathcal B}_{1,k}, B \in {\mathcal B}_{\ell,\infty}$ of non-zero measure,  
\begin{equation}
\label{eq:mixing}
\abs{\frac{\mu(A\cap B)}{\mu(A)\mu(B)}-1}\le T\sigma^{\ell-k} . 
\end{equation}

\noindent
In terms of words and indicator functions, this mixing property implies that  
    for every $ u,v\in\Omega^*$,
    for every  $i,j\in \N$  where  $ j >i +\abs{u}$,   
    \begin{equation} \label{eq:our-mixing}
      \left|  \frac{\mu(x\in \Omega^\N: I_i(x,u) I_j(x,v) = 1) }{ 
 \mu_{\abs{u}}(u) \mu_{\abs{v}}(v)} -1\right| \le T{\sigma^{j-i-|u|}}.
    \end{equation}

\textbf{Invariance}:
The measure $\mu$ is invariant if for every $k\in \N$, $w\in\Omega^k$ and $ i,j \in \N$, 
    \begin{equation}\label{eq:invariannt}
        \mu(x\in\Omega^\N : I_i(x,w) = 1) = \mu(x\in\Omega^\N : I_j(x,w) = 1).
    \end{equation}

\begin{theorem}[Main Result]
\label{thm:main}
For any invariant and exponentially $\psi$-mixing probability measure $\mu$  on $\Omega^{\mathbb N}$,
$\mu$-almost all $x \in \Omega^\N$ are Poisson generic.
\end{theorem}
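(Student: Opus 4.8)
The plan is to extend the approach of \cite{weiss2020,abm}: first establish an ``annealed'' Poisson limit by the method of factorial moments, using the exponential $\psi$-mixing, and then upgrade it to the ``quenched'' (almost sure in $x$) statement by a variance estimate and the Borel--Cantelli lemma. By the one-dimensional convergence criterion for Poisson processes (applicable because the Poisson process on $\R^+$ is simple with diffuse intensity; see e.g.\ \cite{last,kingman}), and since a dissecting ring is generated by the countably many finite unions of intervals with rational endpoints, it suffices to show: for $\mu$-a.e.\ $x$, for every such $S$ and every $r\in\N$,
\[
\Phi^{(k)}_{r,S}(x):=\int_{\Omega^k}\bigl(M_k^x(w)(S)\bigr)_r\,d\mu_k(w)\ \longrightarrow\ |S|^r\qquad(k\to\infty),
\]
where $(n)_r:=n(n-1)\cdots(n-r+1)$; the case $r=1$ gives the mean, and convergence of all factorial moments forces $M_k^x(\cdot)(S)$ to tend in distribution to a Poisson$(|S|)$ variable, since the Poisson law is determined by its moments. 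Only countably many pairs $(r,S)$ occur, so we fix one — say $S=[0,t]$ and a value of $r$ — and intersect the full-measure sets at the end. Expanding the falling factorial and integrating $w$ out (using that $\int_{\Omega^k}\prod_\ell I_{j_\ell}(x,w)\,d\mu_k(w)$ equals $\mu_k(x[j_1,j_1+k))$ when the windows $x[j_\ell,j_\ell+k)$ all coincide, and $0$ otherwise) rewrites
\[
\Phi^{(k)}_{r,S}(x)=\sum_{v\in\Omega^k}\mu_k(v)\,\bigl(A_k(x,v)\bigr)_r,\qquad A_k(x,v):=\#\bigl\{j:\ j\,\mu_k(v)\le t,\ x[j,j+k)=v\bigr\},
\]
i.e.\ a $\mu_k(v)$-weighted count of ordered $r$-tuples of occurrences of $v$ in an initial block of $x$ of length $\approx t/\mu_k(v)$.

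\textbf{Annealed moment.} We prove $\int_{\Omega^\N}\Phi^{(k)}_{r,S}\,d\mu\to t^r$. Expand $\Phi^{(k)}_{r,S}(x)$ as a sum over ordered $r$-tuples of positions and split it into \emph{well-separated} tuples, where the $k$-windows are pairwise at distance $\ge 2k$, and \emph{clustered} tuples (all others). For a well-separated tuple, iterating the mixing inequality \eqref{eq:our-mixing} — and using invariance \eqref{eq:invariannt}, so that each single window carries probability exactly $\mu_k(w)$ — shows that the probability that all $r$ windows equal a fixed $w$ is $\mu_k(w)^r\bigl(1+O(\sigma^{k})\bigr)$; since almost all of the $(t/\mu_k(w))^r$ ordered tuples in $\{1,\dots,\lfloor t/\mu_k(w)\rfloor\}$ are well-separated, summing over $w$ with weight $\mu_k(w)$ gives $t^r\sum_w\mu_k(w)\cdot(1+o(1))=t^r(1+o(1))$. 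For clustered tuples we use two consequences of exponential $\psi$-mixing: (a) \emph{no exponential atoms}, $\max_{w\in\Omega^k}\mu_k(w)\le C\beta^k$ for some $\beta\in(0,1)$ — no point has positive measure since $\mu$ is mixing, and a doubling inequality derived from \eqref{eq:mixing} then forces exponential decay — whence $\sum_w\mu_k(w)^2\le C\beta^k$; and (b) \emph{exponentially rare block collisions}, $\sum_w\mu_{2k}(ww)=\mu\{x:x[1,k)=x[k+1,2k)\}\le C\beta^{k/2}$. A cluster forces either a short period of $w$, or a near-coincidence (at distance in $[0,2k)$) of two distinct windows; in each case one of (a),(b) supplies an exponentially small factor that absorbs the polynomially many clustered configurations, bounding their total contribution by $C_r\,k\,t^{2r}\beta^{ck}\to 0$. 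This proves the annealed limit.

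\textbf{Concentration and conclusion.} Expand $\int_{\Omega^\N}(\Phi^{(k)}_{r,S})^2\,d\mu$ as a sum over pairs of word-values $w,w'$ and pairs of position-tuples. The terms in which the two groups of windows are pairwise at distance $\ge 2k$ across groups and $w\ne w'$ factorize, by \eqref{eq:our-mixing}, up to a factor $1+O(\sigma^{k})$, and reproduce $\bigl(\int\Phi^{(k)}_{r,S}\,d\mu\bigr)^2(1+o(1))$; every other term — the diagonal $w=w'$, or a window of one group close to a window of the other — is controlled by (a),(b) and counting exactly as above, contributing only $O(\mathrm{poly}(k)\,\beta^{ck})$. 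Hence $\mathrm{Var}_\mu\bigl(\Phi^{(k)}_{r,S}\bigr)$ is exponentially small, in particular summable in $k$. By Chebyshev's inequality and Borel--Cantelli, $\Phi^{(k)}_{r,S}(x)-\int\Phi^{(k)}_{r,S}\,d\mu\to 0$ for $\mu$-a.e.\ $x$, which together with the annealed limit gives $\Phi^{(k)}_{r,S}(x)\to t^r$ a.e. Intersecting over the countably many pairs $(r,S)$ and invoking the Poisson convergence criterion shows that $\mu$-a.e.\ $x$ is Poisson generic.

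\textbf{Where the difficulty lies.} The outline ``moments $\Rightarrow$ annealed $\Rightarrow$ concentration $\Rightarrow$ quenched'' is robust, and the well-separated (main) contributions are a routine iteration of the mixing bound. The real effort is in the clustered and close-window terms, most acutely in the second-moment bound, where the sums run over two word-values and two tuples exhibiting several coincidence patterns, and in each pattern one must expose a factor $\sum_w\mu_k(w)^2$ or $\sum_w\mu_{2k}(ww)$ in order to beat the polynomially many configurations; keeping this bookkeeping honest is the crux. A necessary preliminary is the ``no exponential atoms'' estimate $\max_w\mu_k(w)\le C\beta^k$, extracted from the $\psi$-mixing hypothesis; it is also what makes the blocks of length $\approx t/\mu_k(w)$ enormously long, so that the counting asymptotics hold with errors uniform in $w$.
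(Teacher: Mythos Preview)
Your outline is a viable proof, but it follows a genuinely different route from the paper's.

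\textbf{Where you diverge.} For the annealed step the paper does not compute factorial moments; it fixes $w$ and applies the Chen--Stein theorem \cite[Theorem~4.4]{Chen} directly, obtaining a total-variation bound between the law of $M_k^w(S)$ under $\mu$ and the Poisson law, with error controlled by $\mathbb{V}_\mu(M_k^w(S))-\E_\mu[M_k^w(S)]$ and a $(\log n)/n$ term (their Lemma~3). Averaging over $w$ then gives the product-measure limit. For the quenched step the paper does not compute $\mathrm{Var}_\mu(\Phi^{(k)})$ at all: it proves a bounded-differences (McDiarmid-type) concentration inequality for $\psi$-mixing sequences over countable alphabets with functions of infinitely many coordinates (their Propositions~3 and~4), reads off Lipschitz constants $c_i \asymp \min(K\rho^k, (\sup S)/i)$, and gets sub-Gaussian tails directly. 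Finally, for the convergence criterion the paper invokes Kallenberg's theorem (their Proposition~1), which only needs $\limsup \E_{\mu_k}[M_k^x(S)]\le |S|$ and $\mu_k(M_k^x(S)=0)\to e^{-|S|}$; so only \emph{two} functionals per $S$ need to be concentrated, not the full tower $\{\Phi^{(k)}_{r,S}\}_{r\ge 1}$.

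\textbf{What each approach buys.} Your route is more elementary --- no Chen--Stein black box, no martingale-difference concentration machinery --- and is close in spirit to the original Peres--Weiss argument for integer bases. The price is exactly where you locate it: for each $r$ the second moment $\E_\mu[(\Phi^{(k)}_{r,S})^2]$ is a $(2r)$-fold sum over positions with two word values $v,v'$, and the clustering casework (overlapping windows forcing short periods, near windows with $v\ne v'$, etc.) grows quickly with $r$. The paper's combination of Chen--Stein, Kallenberg, and the Lipschitz concentration sidesteps this entirely: the only ``hard'' estimate is $\mathbb{V}_\mu(M_k^w(S))$ (their Lemma~1), which is just the $r=1$ calculation, and the quenched step needs no variance at all, only the easy observation that changing one coordinate $x_{i_0}$ can alter at most $2k$ windows, each of measure at most $\min(K\rho^k,(\sup S)/i_0)$. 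Your ingredient~(a), $\max_w\mu_k(w)\le K\rho^k$, is exactly the paper's contraction-ratio property \eqref{eq:rhoK}; your~(b) follows from bounded distortion and~(a), and is implicit in their Lemma~\ref{period-bound}. Both proofs ultimately rest on the same two structural facts.
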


Thus, Theorem \ref{thm:main} says that  for $\mu$-almost all $x\in \Omega^\mathbb N$,
for every Borel set $S\subseteq \R^+$,
for every integer $j\geq 0$,
\[
\mu_k\left(
\{w\in \Omega^k: M_k(x,w)(S)=j\}\right) \to p(|S|,j),\text{ as $k\to\infty$}.
\]
Theorem~\ref{thm:main} might be true with weaker assumptions on the mixing 
properties of the probability measure~$\mu$. 

Our result differs from the   metric results proved  in the  context of dynamical systems
which say that for each $t>0$,  there exists an exceptional set $E_t\subset \Omega^\mathbb N$ 
with $\mu(E_t)=0$ so that for any  $y\in \Omega^\mathbb N \setminus E_t$ and  every integer $j\ge 0$,
\[\mu\left(\left\{x\in \Omega^\mathbb N: M_k(x,y[1,k])((0,t))=j\right\}\right)\to p(t,j), \text{as $k\to \infty$.}  \]
 The key difference between the two statements lies in whether the point process 
draws randomly the first or the second argument of $M_k(x,w)$. 
Although the statements are symmetric, this symmetry does not extend to the proofs. 
The work on limit theorems for mixing sequences
dates back to Doeblin in 1940~\cite{Doeblin1940}
with his statement that 
the number of  occurrences of large digits in the partial quotients of 
continued fractions follows the Poisson law. This    was  later proved 
by Iosifescu~\cite{Iosifescu-Doeblin}, see also~\cite{Ghosh}.
These pioneering works on continued fractions have evolved into
 the study of the statistics of the number of visits of orbits under a 
given discrete dynamical system to a sequence of sets of positive 
measures shrinking to a point.
When the sequence of  visited sets is sufficiently regular and the 
dynamical system  
satisfies   mixing conditions, it can be shown that the number of visits    
follows a Poisson Law as the measures of the sets converge to zero.

Since the pioneering early works in the field, 
research has diversified significantly, 
including studies of families of dynamical systems under varying mixing conditions,
 statistics of periodic orbits, and analysis of error terms. 
For a historical overview and references, see \cite{HSV99} and \cite{Zwei}.


 Within dynamical systems, our Theorem \ref{thm:main} holds for fibred numerations  systems with an invariant probability measure satisfying the exponentially $\psi$-mixing condition \eqref{eq:mixing}. Following \cite[Definition 2.3]{BaBe06}, given a compact set $X$ and a map $T:X\to X$, we say that $(X,T)$ is a fibred system   if the 
transformation $T\colon X\to X$ is such that  there exist 
countable
set $\Omega$ and a topological  partition  ${ P}= \{C(a) \}_{{a\in {\Omega}}}$ of $X$ for which the
restriction $T_a$ of $T$ to $C(a)$ is injective and  continuous,  for each $a\in {\Omega}$. (Here topological partition means that sets $C(a)$ for $a\in \Omega$ are non-empty, open, and disjoint, and the union of their closures is the whole $X$.)
As it is proved in \cite{BaBe06}, a fibred system defines a numeration system called fibred numeration  system.

Fibred systems define by   piecewise monotonic maps of the interval with surjective branches satisfying the so-called AFU conditions,
 in the sense of \cite{Zwei}, and satisfying \cite[Theorem 1.b]{Aan} fulfill the assumptions of Theorem~\ref{thm:main}.
For instance, integer bases and their generalization to  numeration systems induced
 by 
countable set of independent  digits. Other examples are  the   classical, centered, 
and odd continued fractions algorithms. 
The map associated with continued fractions is the Gauss map 
\[T:[0,1]\to [0,1],\quad T(x)=1/x-\lfloor 1/x\rfloor, \text{ if $x\neq 0$  and $T(0)=0$}.\]
The Gauss measure defined as $d\mu(x)=dx/( (1+x)\ln 2)$ is invariant and   exponentially $\psi$-mixing for $T$, for a proof see, for instance, \cite[Theorem 1.2.1, Corollary 1.3.15]{IosiKraa}.

One example of piecewise monotonic maps (non-surjective) with  AFU conditions are beta shifts.  For a characterization of beta shifts having invariant and exponentially $\psi$-mixing  probabilities, see \cite[Section 2]{Aan}. 
There are also examples in two dimensions as the Ostrowski dynamical system given by the map $ T : [0, 1)^2 \to   [0, 1)^2,\ T (x, y) = (1/x-\lfloor 1/x\rfloor, y/x-\lfloor y/x\rfloor),\ T (0, y) = (0, 0)$. A detailed description of the corresponding fibred numeration system can be found in~\cite{BeLee23}. The map $T$ has an invariant and exponentially $\psi$-mixing probability measure, see \cite[Theorem 4.4]{BeLee23}.

Another point of view for numerations systems is to consider them as stochastic processes taking
 values in a 
countable alphabet such as irreducible and aperiodic Markov chains with 
a finite number of states.  Theorem \ref{thm:main}, of course, applies to these cases.

To prove Theorem~\ref{thm:main} we adapt  Peres and Weiss' proof strategy used 
in~\cite{weiss2020,abm}. 
The proof consists of two parts, an annealed result and a quenched result.
For the annealed result, for each word in $\Omega^k$ we
use the Chen--Stein method~\cite{Chen} to obtain a uniform bound for the rate of 
convergence of $M_k^w(.)$ to the Poisson law.
We then show that the sequence of point processes $M_k(.)$ on $\R^+$ converges in 
distribution to a Poisson point process on $\R^+$  in the product space $\Omega^\N\times \Omega^k$.
To obtain this part we prove  Lemma \ref{le:wfix}, which is a version for an infinite 
alphabet of   Abadi and Vergne's pointwise limit theorem with sharp error terms~\cite{AbadiVergne} 
which holds for a finite alphabet. 

The quenched result is an application of a concentration inequality that proves that 
what happens on average in the space $\Omega^\N$  is essentially what happens 
for almost all sequences. 
%
%
With this end in mind we prove a concentration inequality that holds for  alphabets and  for functions 
depending on countably many variables with 
the bounded differences property 
 known as Lipschitz condition for a weighted Hamming distance. For this we adapt 
the martingale difference method given in \cite[Theorem 3.3.1]{KontoThesis} and 
\cite[Theorem 1.1]{KontorovichRamanan2008}.  The former  deals with  a finite 
alphabet and weighted Hamming distances and  the latter deals with an infinite 
alphabet but a constant Hamming distance.  
 
To account for infinite memory numeration systems with digits in a countable 
alphabet we have   to work with functions depending on infinitely many variables.

\section{Proof of Theorem 
\ref{thm:main}}

The next proposition gives sufficient conditions for a sequence of random 
measures on $\R^+$  to converge to a Poisson random measure on  $\R^+$.  
These are conditions just on integer-valued random measures (point processes)  
on finite unions of disjoint intervals with rational endpoints.
It is later used in Lemmas~\ref{lemma:annealed}  and~\ref{prop:quenched}.

 \begin{proposition}[Instantiation of Kallenberg \protect{\cite[Theorem 4.18]{kallenberg2017random}}]  
\label{kallenberg}
Let $(X_k(\cdot))_{k\in\N}$ be a sequence of point processes on~$\R^+$ and
let~$Y(\cdot)$ be a 
Poisson process on~$\R^+$.
If for any $S\subseteq \R^+$ that is a  finite union  of disjoint intervals with rational
endpoints we have 
\begin{enumerate}
    \item $\limsup\limits_{k \to \infty}  \E[X_k(S)] \leq \E[Y(S)]$  and 
    \item $\lim\limits_{k \to \infty}  \prob\pare{  X_k(S) = 0 } = \prob\pare{  Y(S) = 0 }$
\end{enumerate}
then $X_k(\cdot)$ converges in distribution to $ Y(\cdot)$, as $k \to \infty$. 
\end{proposition}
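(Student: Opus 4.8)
The plan is to reduce the statement to the precise form in which Kallenberg's convergence criterion is usually phrased, namely that a sequence of point processes converges in distribution to a simple point process $Y$ on a nice (locally compact, second countable) state space as soon as one controls (i) the intensities on a rich enough class of test sets and (ii) the void (avoidance) probabilities on that same class. Here the state space is $\R^+$, $Y$ is the Poisson process with intensity equal to Lebesgue measure, which is simple and diffuse, and the natural rich class is the ring generated by intervals with rational endpoints, i.e.\ finite unions of disjoint such intervals. The work is therefore entirely in checking that the two displayed hypotheses are exactly what \cite[Theorem 4.18]{kallenberg2017random} asks for, after a standard monotone-class/approximation argument.

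First I would record the two structural facts about the limit $Y$ that make the general theorem applicable: $Y$ is a \emph{simple} point process (almost surely no two points coincide, since its intensity is non-atomic) and its intensity measure $\E[Y(\cdot)]$ is Lebesgue measure, which is locally finite and diffuse. In particular the class $\mathcal U$ of finite disjoint unions of bounded intervals with rational endpoints is a dissecting, convergence-determining ring for $\R^+$, it is closed under finite unions and relative complements, and every bounded open set can be approximated from inside and outside by members of $\mathcal U$ whose Lebesgue measures converge. These are the hypotheses under which Kallenberg's theorem identifies ``convergence of one-dimensional void probabilities plus control of intensities'' with full weak convergence of the point processes.

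Next I would argue that hypotheses (1) and (2) deliver the two inputs of that theorem. Condition (2), $\prob(X_k(S)=0)\to\prob(Y(S)=0)=e^{-|S|}$ for every $S\in\mathcal U$, is precisely convergence of the avoidance probabilities on the determining ring $\mathcal U$. Since $Y$ is simple and $\mathcal U$ is dissecting, convergence of avoidance probabilities on $\mathcal U$ upgrades (via the inclusion–exclusion / dissection argument in Kallenberg) to convergence of all finite-dimensional distributions $(X_k(S_1),\dots,X_k(S_m))$ for disjoint $S_1,\dots,S_m\in\mathcal U$, hence to $X_k\Rightarrow Y$, \emph{provided} the sequence $(X_k)$ is tight in the vague topology. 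Condition (1), $\limsup_k\E[X_k(S)]\le\E[Y(S)]=|S|<\infty$ for every bounded $S\in\mathcal U$, is exactly the uniform-boundedness-of-intensities criterion that yields this tightness: covering any compact subset of $\R^+$ by finitely many rational intervals and applying Markov's inequality to $X_k$ on that cover shows that $\sup_k\prob(X_k(K)>N)\to 0$ as $N\to\infty$, which is the vague tightness condition. Combining tightness from (1) with convergence of finite-dimensional distributions from (2) gives $X_k(\cdot)\Rightarrow Y(\cdot)$.

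The main obstacle, such as it is, is purely one of bookkeeping rather than of mathematics: one must make sure that the ring $\mathcal U$ is genuinely convergence-determining for point processes on $\R^+$ — i.e.\ that testing on finite unions of rational-endpoint intervals suffices — and that the one-sided bound in (1) (a $\limsup$, not a limit) is enough for tightness. Both are true because Lebesgue measure puts no mass on the (countably many) rational endpoints, so the boundary effects vanish, and because tightness only needs an upper bound on intensities; I would simply cite the relevant portions of \cite[Theorem 4.18]{kallenberg2017random} (together with the standard fact that a simple point process is determined by its avoidance function on a dissecting ring) and spell out the two-line Markov-inequality tightness estimate. No further computation is required.
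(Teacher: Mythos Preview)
The paper does not actually give a proof of this proposition: it is stated, with the label ``Instantiation of Kallenberg \cite[Theorem 4.18]{kallenberg2017random}'', and then used without further justification in Lemmas~\ref{lemma:annealed} and~\ref{prop:quenched}. So there is no argument in the paper to compare your proposal against.

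That said, your sketch is a perfectly sensible unpacking of why \cite[Theorem 4.18]{kallenberg2017random} specializes to the displayed statement. You correctly identify the two ingredients that Kallenberg's criterion requires---convergence of avoidance probabilities on a dissecting ring (supplied by hypothesis~(2)) and a tightness condition (supplied by the one-sided intensity bound in hypothesis~(1) via Markov's inequality)---and you note the structural facts about the Poisson limit (simplicity, diffuse intensity) that make the ring $\mathcal U$ of finite unions of rational intervals convergence-determining. This is exactly the content one would supply if asked to justify the word ``instantiation'' in the proposition's title; the paper's authors evidently regarded these verifications as routine enough to omit.
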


\subsection{The annealed result}

Fix a real number  $\lambda>0$. For any $h:\mathbb N \to \mathbb R$, 
the number  $P(\lambda,h)$ denotes the average value of $h$ with 
respect to the Poisson distribution of parameter $\lambda$:  
\[
P(\lambda,h)=e^{-\lambda}\sum_{j=0}^\infty h(j)\lambda^j/j!.
\]
In particular, if $h(j)=\mathbbm 1_{j}$, we recover the   probability 
mass function $p(\lambda,j)$, where  $p(\lambda,j)=e^{-\lambda} \lambda^j/j!.$
When $W$ is a random variable defined on $\Omega^\mathbb N$, we write  
\[\E_\mu\corch{W}=\int_{\Omega^\mathbb N} W(x)d\mu(x).\]
If $W=\mathbbm 1_{S}$ for some $S\subset \Omega^{\mathbb N}$,
$\E_\mu\corch{\mathbbm 1_{S}}=\mu(S).$
Similarly, when $W$ is defined on $\Omega^\mathbb N\times \Omega^k$, we write
\[\E_{\mu\times \mu_k}\corch{W}=\int_{\Omega^\mathbb N\times \Omega^k}W(x,w)d\mu(x) d\mu_k(w).\]

Every exponentially $\psi$-mixing measure $\mu$ has a contraction ratio,~\cite[Lema 1]{Ab01}.

\begin{definition}[Contraction ratio $\rho$] Let $\mu$ be an invariant exponentially $\psi$-mixing  measure on $\Omega^\mathbb N$. 
The measure  $\mu$  has a contraction ratio $\rho\in (0,1)$ 
if there is constant $K > 0$ such that 
    for every $k\in \N$, for every $ w\in\Omega^k$,
\begin{equation}\label{eq:rhoK}
        \mu_k(w) \leq K {\rho}^k.
\end{equation}
\end{definition}

\begin{definition}[Bounded distortion constant $R$]
The mixing property for $\mu$ implies that there is a constant  $R>0$ such that, for every $u,v\in\Omega^*$, for every $i,\ell\ge 1$,
 \begin{equation} \label{bd}   \nonumber    \mu(x\in \Omega^\N: I_i(x,w) I_{i+\ell}(x,v) = 1) \le R
 \mu_{\abs{u}}(u) \mu_{\abs{v}}(v).
  \end{equation} 
We call this the bounded distortion property and we say that $R$ is the bounded distortion constant. 
\end{definition}

\begin{definition}[Periods of a word]\label{Def:period}
A word 
$w \in \Omega^k$ has period $\ell$ if $\ell < k$ and $w_i = w_{i+\ell}$ for all $1 \leq i \leq k-\ell$.
For  $w\in \Omega^k$, the set $\pi_w$ gathers the positive integers which are its periods, 
\[
\pi_w = \{\ell : w \text { has period } \ell \}.
\]
\end{definition}

\begin{definition}[Set $\J_{w, S}$]
For a given set $S\subset \mathbb R$ and a fixed word $w\in \Omega^k$, we define the set 
\[
\J_{w, S} = \set{i \in \N : i \mu_k(w) \in S}.
\]
\end{definition}

We use $\#A$ to denote the cardinality of a finite set  $A$.
We are interested in the cardinality of $\J_{w, S}$.
If $S$ is an interval $(a,b)$, for any $ a,b\in \mathbb R, a<b$,
\[ 
\frac{b-a}{\mu_k(w)} - 1\le \# \J_{w, S} \le \frac{b-a}{\mu_k(w)} + 1.
\]
Therefore, if $S$ is a finite union of $m$ nonempty intervals,
\begin{equation}
\label{eq: measure of S} \frac{\abs{S}}{\mu_k(w)} - m\le \#\J_{w,S}\le  \frac{\abs{S}}{\mu_k(w)} + m.
\end{equation}
That is, $\#\J_{w,S}=\abs{S}/{\mu_k(w)}+O(1)$ as $k\to \infty$ 
where the hidden constant of the $O$ term only depends on the number of intervals of  $S$.

\begin{lemma}\label{Le:exp} Let $\mu$ be an invariant  and exponentially $\psi$-mixing measure on $\Omega^{\mathbb N}$ with contraction ratio  $\rho$.
Let $S\subset \mathbb R^+$ be  a finite union of bounded intervals.
  For each $k\in \mathbb N$,  for each fixed $w \in \Omega^k$,  the following hold

\begin{enumerate}
\item 
$ \E_\mu\corch{M_k^w(S)}=
|S|+O(\rho^k),
 $
 \item $\E_\mu\corch{\pare{M_k^w(S)}^2} 
 =\abs{S} + \abs{S}^2  + O\pare{k \rho^k}  + O\pare{\sum_{\ell \in \pi_w} \rho^\ell}, $
 
 \item 
 $     \mathbb V_{\mu}(M_k^w(S)) = \abs{S} + O(k \rho^k) 
    + O\pare{\sum_{\ell \in \pi_w} \rho^\ell},
$

where $\pi_w$ are the periods of the word $w\in \Omega^k$ and the hidden constant in the $O$-term only depends on~$S$.
\end{enumerate}
\end{lemma}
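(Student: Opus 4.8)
The plan is to compute the first and second moments of $M_k^w(S)$ directly from the definition $M_k^w(S)=\sum_{j\in\J_{w,S}} I_j(x,w)$, exploiting invariance to control the first moment and the exponentially $\psi$-mixing condition (in the form of bounded distortion and \eqref{eq:our-mixing}) to control the cross terms in the second moment; part (3) is then immediate from (1) and (2) since $\mathbb V_\mu(M_k^w(S))=\E_\mu[(M_k^w(S))^2]-(\E_\mu[M_k^w(S)])^2$, and the $\abs{S}^2$ cancels leaving $\abs{S}+O(k\rho^k)+O(\sum_{\ell\in\pi_w}\rho^\ell)$.

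For part (1): by linearity and invariance, $\E_\mu[I_j(x,w)]=\mu_{|w|}(w)=\mu_k(w)$ for every $j$, so $\E_\mu[M_k^w(S)]=\#\J_{w,S}\cdot\mu_k(w)$. By \eqref{eq: measure of S}, $\#\J_{w,S}=\abs{S}/\mu_k(w)+O(1)$ with the $O(1)$ depending only on the number $m$ of intervals of $S$, hence $\E_\mu[M_k^w(S)]=\abs{S}+O(\mu_k(w))=\abs{S}+O(\rho^k)$ using the contraction ratio bound $\mu_k(w)\le K\rho^k$.

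For part (2): expand $(M_k^w(S))^2=\sum_{i,j\in\J_{w,S}} I_i(x,w)I_j(x,w)$ and split into the diagonal $i=j$ and the off-diagonal $i\ne j$. The diagonal contributes $\sum_{i\in\J_{w,S}}\E_\mu[I_i(x,w)]=\#\J_{w,S}\cdot\mu_k(w)=\abs{S}+O(\rho^k)$ as in part (1) (here $I_i^2=I_i$). For the off-diagonal, if $\abs{i-j}<k$ then the overlap of the two occurrences of $w$ forces $\abs{i-j}$ to be a period of $w$, i.e. $\abs{i-j}\in\pi_w$; for each such period $\ell$ there are at most $\#\J_{w,S}=O(1/\rho^k)$ choices of the smaller index, and by bounded distortion \eqref{bd} each term is at most $R\mu_k(w)^2\le RK\rho^k\cdot\mu_k(w)$, so the total contribution of the "close" pairs is $O(\sum_{\ell\in\pi_w}\rho^k\cdot(\mu_k(w)/\mu_k(w)))$; more carefully, $\#\{(i,j):\abs{i-j}=\ell,\ i,j\in\J_{w,S}\}\cdot R\mu_k(w)^2=O(\rho^\ell)$ after using $\#\J_{w,S}\mu_k(w)=O(1)$ and a geometric bound, giving the $O(\sum_{\ell\in\pi_w}\rho^\ell)$ term. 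For pairs with $\abs{i-j}\ge k$, apply \eqref{eq:our-mixing}: $\E_\mu[I_i(x,w)I_j(x,w)]=\mu_k(w)^2(1+O(\sigma^{\abs{i-j}-k}))$, so summing over all such ordered pairs in $\J_{w,S}$ gives $\mu_k(w)^2\big(\#\{\text{far pairs}\}+O(\sum_{d\ge 0}\#\J_{w,S}\,\sigma^d)\big)$. The main term is $\mu_k(w)^2(\#\J_{w,S}^2-\#\{\text{close pairs}\}-\#\J_{w,S})=\abs{S}^2+O(\rho^k)+O(\sum_{\ell\in\pi_w}\rho^\ell)$, and the error term is $\mu_k(w)^2\cdot O(\#\J_{w,S})\cdot O(1)=O(\mu_k(w))=O(\rho^k)$. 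Collecting, the one place a factor of $k$ enters is when bounding the number of periods: $\#\pi_w\le k$, and in the close-pair and diagonal bookkeeping one absorbs a crude $O(k\rho^k)$ slack, yielding $\E_\mu[(M_k^w(S))^2]=\abs{S}+\abs{S}^2+O(k\rho^k)+O(\sum_{\ell\in\pi_w}\rho^\ell)$.

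The main obstacle is the careful treatment of the "close" off-diagonal pairs: one must use that an overlap of length $k-\ell$ between two copies of $w$ placed $\ell$ apart forces $\ell\in\pi_w$, must count such pairs within $\J_{w,S}$ (there are $O(1/\rho^k)$ of them per period, but this is offset by the $\mu_k(w)^2\le K\rho^k\mu_k(w)$ size of each term and the constraint $\ell\le k$), and must sum the resulting bound as $O(\sum_{\ell\in\pi_w}\rho^\ell)$ — this is exactly the term that cannot be absorbed into $O(k\rho^k)$ when $w$ is highly periodic (e.g. $w=a^k$, where $\pi_w=\{1,\dots,k-1\}$ and $\sum_{\ell\in\pi_w}\rho^\ell=O(1)$, reflecting the genuine failure of the Poisson approximation for periodic words). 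Everything else is routine geometric summation together with the already-established bounds \eqref{eq:rhoK}, \eqref{bd}, \eqref{eq:our-mixing}, and \eqref{eq: measure of S}.
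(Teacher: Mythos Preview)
Your overall architecture matches the paper's: same split into diagonal, close off-diagonal ($1\le |i-j|<k$), and far off-diagonal ($|i-j|\ge k$), and Parts~1 and~3 are fine. The gap is in your treatment of the close pairs.

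You bound $\E_\mu[I_i^w I_{i+\ell}^w]\le R\,\mu_k(w)^2$ for $\ell<k$ by invoking bounded distortion. But bounded distortion, as it follows from $\psi$-mixing, only controls \emph{non-overlapping} events; for overlapping copies of $w$ the inequality is false in general. Take $\mu$ i.i.d.\ with $\mu_1(a)=p$ and $w=a^k$, $\ell=1$: then $\E_\mu[I_i^w I_{i+1}^w]=\mu_{k+1}(a^{k+1})=p^{k+1}$, while $R\,\mu_k(w)^2=R\,p^{2k}$, so you would need $R\ge p^{1-k}$, impossible for a fixed constant. Consequently your subsequent claim that the contribution per period is $O(\rho^\ell)$ has no valid derivation; from $R\,\mu_k(w)^2$ and $\#\J_{w,S}\mu_k(w)=O(1)$ you can only extract $O(\rho^k)$ per period, and there is no mechanism producing $\rho^\ell$.

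The fix (and what the paper does) is to use the period structure to convert the overlapping event into an adjacent, non-overlapping one: since $\ell\in\pi_w$, the event $\{I_i^w I_{i+\ell}^w=1\}$ coincides with $\{I_i(x,w[1,\ell])\,I_{i+\ell}(x,w)=1\}$, a pair of words of lengths $\ell$ and $k$ with no overlap. Now bounded distortion legitimately gives
\[
\E_\mu[I_i^w I_{i+\ell}^w]\ \le\ R\,\mu_\ell\bigl(w[1,\ell]\bigr)\,\mu_k(w)\ \le\ RK\,\rho^{\ell}\,\mu_k(w),
\]
where the $\rho^{\ell}$ comes from the contraction ratio applied to the length-$\ell$ prefix. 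Summing over $i\in\J_{w,S}$ and using $\#\J_{w,S}\,\mu_k(w)=O(1)$ yields the $O\bigl(\sum_{\ell\in\pi_w}\rho^\ell\bigr)$ term. A secondary remark: the $O(k\rho^k)$ in the lemma does not arise from bounding $\#\pi_w\le k$ as you suggest, but from the far-pair count in $E_3$, where $\#\{\text{far pairs}\}=(\#\J_{w,S})^2+O(k\,\#\J_{w,S})$ and the $O(k\,\#\J_{w,S})\cdot\mu_k(w)^2=O(k\rho^k)$.
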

\begin{proof}
We write $I_i^w(x)$ to denote $I_i(x,w)$.

{\em Point 1}. As a direct consequence of    \eqref{eq: measure of S} we have
\begin{align*}
 \E_\mu\corch{M_k^w(S)}&
 =\int_{\Omega^\mathbb N}\pare{M_k(x,w)(S)} d\mu(x)
 \\&=\sum_{i \in \J_{w,S}} \E_\mu\corch{ I_i^w } 
         \\&=\sum_{i \in \J_{w,S}} \mu_k(w) 
         \\&= \pare{\frac{\abs{S}}{\mu_k(w)} + O(1)} \mu_k(w). 
    \end{align*}
The contraction ratio property   
$\mu_k(w)=O(\rho^k)$  yields 
\[\E_\mu\corch{M_k^w(S)}= \abs{S} + O(\mu_k(w))  
     = \abs{S} + O(\rho^k).   
\]

{\em Point 2}. The random variable $\pare{M_k^w(S)}^2$ is a sum which involves the products $I_i^w I_j^w$, for $i,j\in \J_{w,  S}$. 
We split that sum  as follows:
 \[\pare{M_k^w(S)}^2= \sum_{i \in \J_{w,S}}     (I_i^w) ^ 2  +\sum_{i \in \J_{w,S}} 
            \sum_{\substack{j \in \J_{w,S} \\ 1\le \abs{i-j} < k}} I_i^w I_j^w +\sum_{i \in \J_{w,S}} 
            \sum_{\substack{j \in \J_{w,S} \\ \abs{i-j} \geq k}}  I_i^w I_j^w .\]
Let   
\begin{align*} 
    E_1 & = \sum_{i \in \J_{w,S}} \E_\mu  \corch{ (I_i^w) ^ 2} ;\\ 
    E_2 & = \sum_{i \in \J_{w,S}}
        \sum_{\substack{j \in \J_{w,S} \\ 1 \leq \abs{i-j} < k}} \E_\mu\corch{I_i^w I_j^w} 
        ;\\ 
    E_3 & = \sum_{i \in \J_{w,S}} 
        \sum_{\substack{j \in \J_{w,S} \\ \abs{i-j} \geq k}} \E_\mu\corch{I_i^w I_j^w} 
        .
\end{align*}
Notice that $\E_\mu\corch{(M^w_k(S))^2} =E_1+E_2+E_3$.

We prove that $E_1= \abs{S} + O(\rho^k)$.
This is a direct consequence of the fact that $(I_i^w)^2=I_i^w$ because $I_i^w$ is an indicator function and the estimate already proved for $\E_\mu\corch{M_k^w(S)}$: 
\begin{align*}
 E_1=   \sum_{i \in \J_{w,S}} \E_\mu\corch{\pare{I_i^w}^2} 
    &=\sum_{i \in \J_{w,S}} \E_\mu\corch{I_i^w} 
     =\E_\mu\corch{M_k^w(S)}=|S|+O(\rho^k).   
\end{align*}

 We prove that $E_2= 
        O\pare{\sum_{\ell \in \pi_w} 
        \rho^\ell}$.
The first step is based on the following observation: consider $i,j\in \mathbb N$, $j>i$. For a fixed $w$,
there exists $x\in \Omega^\N$ for which 
$I^w_iI^w_j(x)=1$ if and only if $j-i$ is a period of $w$, 
 $j-i\in \pi_w$,
\begin{align*}
  \sum_{i \in \J_{w,S}} 
    \sum_{\substack{j \in \J_{w,S} \\ 1 \leq \abs{i-j} < k}}
    \E_\mu\corch{I_i^w I_j^w} 
    &= 
2 \sum_{\ell \in \pi_w} 
    \sum_{\substack{i \in \J_{w,S} \\ i + \ell \in \J_{w,s}  }  }
    \E_\mu\corch{I_i^w I_{i + \ell}^w}. 
\end{align*}
The bounded distortion property, the invariance of the measure and, finally, the existence of a contraction ratio imply that
\begin{align*}
  E_2  &
 \le 
 2 \sum_{\ell \in \pi_w} 
    \sum_{\substack{i \in \J_{w,S} \\ i + \ell \in \J_{w,s}  }  }
    R \mu_k(w) \mu_{\ell}\pare{w[1 \ldots \ell]} 
    \leq 
     \sum_{\ell \in \pi_w} 
     2 R K \mu_k(w) \rho^\ell
    \sum_{\substack{i \in \J_{w,S}\\ i + \ell \in \J_{w,s}}} 1.
\end{align*}
Now, we use \eqref{eq: measure of S} in order to deal with $\#\J_{w,s}$ and obtain
\begin{align*}     
 E_2   \le
    \sum_{\ell \in \pi_w} 
    2 R K \mu_k(w) \rho^\ell  
        \pare{\frac{\abs{S}}{\mu_k(w)} + O(k)} 
    &= 
    O\pare{\sum_{\ell \in \pi_w}
    \rho^\ell + k \mu_k(w) \rho^{\ell}} 
  \\&=
    O\pare{\sum_{\ell \in \pi_w} \rho^\ell},
\end{align*}
which proves the estimate.
\smallskip

We prove that $E_3= \abs{S}^2 + O(k \rho^k)$.
First, by the mixing property \eqref{eq:our-mixing}and some manipulations:
\begin{align*}
   E_3= \sum_{i \in \J_{w,S}} 
    \sum_{\substack{j \in \J_{w,S} \\ \abs{i-j} \geq k}} 
    \E_\mu\corch{I_i^w I_j^w} 
  &= \sum_{i \in \J_{w,S}} 
    \sum_{\substack{j \in \J_{w,S} \\ \abs{i-j} \geq k}} 
    \mu_k^2(w) \pare{1 + O\pare{\sigma^{|i-j|-k}}}\\
    &=  
    \mu_k^2(w) \pare{
        \pare{
        \sum_{i \in \J_{w,S}} 
        \sum_{\substack{j \in \J_{w,S} \\ \abs{i-j} \geq k}} 1
        }
        + 
        O\pare{
        \sum_{i \in \J_{w,S}} 
        \sum_{\substack{j \in \J_{w,S} \\ \abs{i-j} \geq k}}
        \sigma^{|i-j|-k}.
        }
    }
    \end{align*}
Fix $i\in \J_{w,S}$. Notice that 
$\J_{w,S}=\pare{\J_{w,S}\cap \{j: |i-j|\ge k  \}} \cup \pare{\J_{w,S}\cap \{j: |i-j|< k  \}} $. 
The   set $\{j: |i-j|< k  \}$ has cardinality at most $2k$. Hence, with  \eqref{eq: measure of S},
\[\#\pare{\J_{w,S}\cap \{j: |i-j|\ge k  \}}=\# \J_{w,S} + O(k)=\frac{|S|}{\mu_k(w)}+O(k).\]
We move on to the next sum. Using the sum of the geometric series with $\sigma < 1$, 
we get the bound
\begin{align*}
\sum_{i \in \J_{w,s}} \sum_{\substack{j \in \J_{w,S} \\ \abs{i-j} \geq k}} 
\sigma^{|i-j| - k} \le \#{\J_{w,S}}\sum_{n = -\infty}^\infty \sigma^{|n|}  = O\pare{1 / \mu_k(w)}.
\end{align*}
The contraction ratio property yields   
    \begin{align*}
   E_3
    &= 
    \mu_k^2(w) \pare{ 
    \pare { \frac{\abs{S}}{\mu_k(w)} + O(1) }
    \pare { \frac{\abs{S}}{\mu_k(w)} + O(k)}
        + O\pare{1 / \mu_k(w)}
    }
    \\
    &= \abs{S}^2 + 
    O\pare{k \mu_k(w)} \\
    &= \abs{S}^2 + 
    O\pare{k \rho^k}.
\end{align*}
Since $\E_\mu[(M^w_k(S))^2] =E_1+E_2+E_3$, the above estimates complete the proof of this point.
\smallskip

{\em Point 3}. 
By definition,
    \begin{align*}
        \mathds{V}_{\mu}(M_k^w) &= \E_\mu\corch{(M_k^w)^2} - \E_\mu\corch{M_k^w} ^ 2 \\
        &= \abs{S} + \abs{S}^2 + O(k \rho^k)
        + 
        O\pare{\sum_{\ell \in \pi_w} \rho^\ell} 
        - \pare{\abs{S} + O(\rho^k)} ^ 2\\
               &= \abs{S} + O(k \rho^k) 
        + O\pare{\sum_{\ell \in \pi_w} \rho^\ell}.
    \end{align*}
\end{proof}

\begin{lemma}\label{Le:expprod} Let $\mu$ be an invariant exponentially $\psi$-mixing measure on $\Omega^{\N}$ with contraction ratio $\rho$. For each $k\in \N$, consider the projection $\mu_k$ of $\mu$ over $\Omega^k$. Let $S\subset \mathbb R^+$ be a finite union of bounded intervals. Then, the following holds
\[\E_{\mu\times \mu_k}[M_k(S)]=|S|+ O(\rho^k).
\]
\end{lemma}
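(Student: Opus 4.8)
The plan is to obtain the statement by integrating the single-word estimate of Lemma~\ref{Le:exp}, Point~1, over the first $k$ coordinates. Since $\Omega^k$ is countable and $M_k(x,w)(S)$ is a countable sum of jointly measurable non-negative indicator functions, Tonelli's theorem lets me write
\[
\E_{\mu\times\mu_k}[M_k(S)]
= \sum_{w\in\Omega^k}\mu_k(w)\int_{\Omega^\N}M_k(x,w)(S)\,d\mu(x)
= \sum_{w\in\Omega^k}\mu_k(w)\,\E_\mu\corch{M_k^w(S)} .
\]

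Next I would feed in Lemma~\ref{Le:exp}, Point~1: for each fixed $w$ with $\mu_k(w)>0$ one has $\E_\mu\corch{M_k^w(S)} = |S| + O(\rho^k)$. The point that needs checking is that this error term is \emph{uniform} in $w$. Inspecting the proof of that lemma, the error is $O(\mu_k(w))$ with a hidden constant depending only on the number of intervals composing $S$ (it enters through \eqref{eq: measure of S}), and the contraction ratio property \eqref{eq:rhoK} bounds $\mu_k(w)\le K\rho^k$ simultaneously for all $w\in\Omega^k$. Hence there is a constant $c=c(S,K)$ with $\bigl|\E_\mu\corch{M_k^w(S)}-|S|\bigr|\le c\,\rho^k$ for every $w$ with $\mu_k(w)>0$; the words $w$ with $\mu_k(w)=0$ contribute $\mu_k(w)\,\E_\mu\corch{M_k^w(S)}=0$ to the sum and may be ignored.

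Finally I would sum this bound against $\mu_k$, using that $\mu_k$ is a probability measure on $\Omega^k$ (being the projection of the probability measure $\mu$), so $\sum_{w}\mu_k(w)=1$:
\[
\Bigl|\E_{\mu\times\mu_k}[M_k(S)]-|S|\Bigr|
= \Bigl|\sum_{w}\mu_k(w)\bigl(\E_\mu\corch{M_k^w(S)}-|S|\bigr)\Bigr|
\le c\,\rho^k\sum_{w}\mu_k(w) = c\,\rho^k ,
\]
which is exactly $\E_{\mu\times\mu_k}[M_k(S)]=|S|+O(\rho^k)$.

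I do not expect a genuine obstacle here beyond the uniformity of the error term just discussed: this is the only place where it matters that $\mu$ has a \emph{contraction ratio}, rather than merely the qualitative fact that $\mu_k(w)\to 0$, and it is precisely what makes the argument go through for a countably infinite alphabet $\Omega$.
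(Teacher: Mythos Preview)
Your proof is correct and follows the same approach as the paper: decompose $\E_{\mu\times\mu_k}[M_k(S)]$ as $\sum_{w}\mu_k(w)\,\E_\mu[M_k^w(S)]$, apply Point~1 of Lemma~\ref{Le:exp}, and sum against the probability measure $\mu_k$. Your version is in fact more careful than the paper's one-line argument, since you make explicit the uniformity in $w$ of the $O(\rho^k)$ term (via the contraction ratio) and the Tonelli justification.
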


\begin{proof}
By definition of $\E_{\mu\times \mu_k}[M_k(S)]$ and then Point 1 of   Lemma~\ref{Le:exp} we obtain,
\[\E_{\mu\times \mu_k}[M_k(S)]=\sum_{w\in \Omega^k}\mu_k(w)\E_\mu[M_k^w(S)]=\sum_{w\in \Omega^k}\mu_k(w)\left(|S|+O(\rho^k)\right)=|S|+O(\rho^k).\]
\end{proof}
The total variation distance $d_{TV}$ between two probability measures $Q$ and $R$ on a 
sigma-algebra~$\mathcal{F}$ is defined via
\begin{align*}
d_{TV}(Q,R)=\sup_{ A\in \mathcal{F}}\left|Q(A)-R(A)\right|.
\end{align*}

The total variation distance between two random variables~$X$ and $Y$ taking values in $\mathbb N$ is simply 
\begin{align*}
d_{TV}(X,Y) = \sup_{h:\mathbb N\to \mathbb R,\ |h|\le 1} \left|\E[h(X)] - \E[h(Y)]\right|.
\end{align*}
For each $w\in\Omega^*$ and each $S\subseteq R^+$ we bound the total variation distance between the distribution of the random variable $M^w_k(S)$ and the Poisson distribution 
using  Chen's result~\cite[Theorem 4.4]{Chen}, stated below as Proposition \ref{prop:chen}. It  considers a sequence (finite or infinite) of random variables $X_1, X_2, X_3\dots$ with the  following mixing condition:
\medskip

\textbf{Exponentially $\phi$-mixing condition}: 
 Let $i,j$ be natural numbers with $j>i$ and let $\mathcal B_{i,j}$ be the sigma-algebra generated by the random variables $X_i,...,X_j$.  With $\phi:\mathbb N \to \mathbb R$, $\phi(m)=e^{-\alpha m}$ for some $\alpha>0$,   for every 
 for every $A \in  \mathcal{B}_{1,i} $ and for every 
 $B\in \mathcal{B}_{j, \infty} $, 
\begin{equation}\label{eq:ChM}
\abs{
\frac{{\mathbb P}(A\cap B)}{{\mathbb P}(A)}-\mathbb P(B)}\le \phi(j-i)\quad \text{ for any}\ j>i. 
\end{equation}
 Notice that our  exponentially $\psi$-mixing condition \eqref{eq:our-mixing}
implies  exponentially $\phi$-mixing \eqref{eq:ChM}.

\begin{proposition}[{{\protect\cite[Chen's Theorem 4.4]{Chen}}}] 
\label{prop:chen}
Let $X_1,\dots, X_n$ be a sequence of identically distributed random variables taking values in $\{0,1\}$ and satisfying the exponentially $\phi$-mixing condition \eqref{eq:ChM} for some $\alpha>0$.   
Define \[W=\sum_{i=1}^n X_i \quad \text{ and } \quad \lambda=\E\corch{W}. \]
Then, for any $h:\mathbb N\to \mathbb R$ with $|h|\leq 1$ and $n\geq 3$,  
\[
|\E(h(W))-P(\lambda,h)|< C_1(\alpha) \min(\lambda^{-1/2},1) 
\Big(\mathbb V(W) -\lambda+ (\lambda+1)^2 \frac{\log n}{n}\Big)
\]
where $\E\corch{W}$ and $\mathbb V[W]$  denote the expectation  and variation of $W$ respectively, and    $C_1(\alpha)$ 
depends only on $\alpha$. 
\end{proposition}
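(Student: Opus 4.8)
Since Proposition~\ref{prop:chen} is quoted from \cite{Chen} and used here as a black box, I will not reprove it; for orientation, the plan below sketches the Chen--Stein argument behind it. The plan is to run Stein's method for the Poisson distribution. First I would introduce the Stein solution $g=g_h\colon\N\to\R$, the unique function with $g(0)=0$ solving $\lambda g(m+1)-m\,g(m)=h(m)-P(\lambda,h)$, and quote the classical ``magic factor'' estimates on it: because $\abs{h}\le 1$, one has $\norm{g}_\infty\le c\min(\lambda^{-1/2},1)$ and $\sup_m\abs{g(m+1)-g(m)}\le c\min(\lambda^{-1},1)$ for an absolute constant $c$ (these are exactly the source of the prefactor $\min(\lambda^{-1/2},1)$ in the conclusion). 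Substituting $W$ gives $\abs{\E[h(W)]-P(\lambda,h)}=\abs{\E[\lambda g(W+1)-W g(W)]}$, and writing $W g(W)=\sum_{i=1}^n X_i g(W)$ together with $p_i:=\E[X_i]=\lambda/n$ reduces the task to bounding $\sum_{i=1}^n\pare{p_i\,\E[g(W+1)]-\E[X_i g(W)]}$.

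Next I would truncate the dependence. Fix a scale $L$, to be chosen of order $(\log n)/\alpha$, and split $W=X_i+U_i+V_i$, where $U_i=\sum_{0<\abs{j-i}<L}X_j$ is the ``near'' part and $V_i=\sum_{\abs{j-i}\ge L}X_j$ the ``far'' part; note $V_i$ is measurable with respect to $\mathcal B_{1,i-L}\vee\mathcal B_{i+L,\infty}$. Replacing $g(W+1)$, and on $\{X_i=1\}$ also $g(W)=g(1+U_i+V_i)$, by the common value $g(1+V_i)$ costs at most $\sup_m\abs{g(m+1)-g(m)}$ times the number of intervening increments; summed over $i$ these replacement errors are at most a constant times $\min(\lambda^{-1},1)\big(\sum_i p_i(p_i+\sum_{0<\abs{j-i}<L}p_j)+\sum_i\sum_{0<\abs{j-i}<L}\E[X_iX_j]\big)$. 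Using $\mathbb V(W)=\sum_i\mathbb V(X_i)+\sum_{i\ne j}\mathrm{Cov}(X_i,X_j)$ to identify the near-range second-moment sums, and discarding the long-range covariances (each $O(e^{-\alpha L})$ by $\phi$-mixing, hence negligible once $L\asymp(\log n)/\alpha$), this is bounded by $c\min(\lambda^{-1},1)\pare{\abs{\mathbb V(W)-\lambda}+\lambda^2 L/n}$.

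After the replacements there remains $\sum_{i=1}^n\E[(p_i-X_i)\,g(1+V_i)]$. Since $g(1+V_i)$ depends only on coordinates at distance $\ge L$ from $i$, applying the exponentially $\phi$-mixing bound \eqref{eq:ChM} on each side of index $i$ (first conditioning on the far past, then invoking \eqref{eq:ChM} for the far future) gives $\abs{\E[(X_i-p_i)g(1+V_i)]}\le 2\norm{g}_\infty\,\phi(L)\le c\min(\lambda^{-1/2},1)e^{-\alpha L}$, so this term is at most $c\,n\min(\lambda^{-1/2},1)e^{-\alpha L}$. Choosing $L=\floor{(3\log n)/\alpha}+1$ makes $n e^{-\alpha L}\le n^{-2}\le(\log n)/n$ for $n\ge 3$ and turns $\lambda^2 L/n$ into $O(\lambda^2(\log n)/n)$; combining the three contributions and using $\min(\lambda^{-1},1)\le\min(\lambda^{-1/2},1)$ and $\lambda^2\le(\lambda+1)^2$ yields the stated inequality, with $C_1(\alpha)$ absorbing $c$, the factor $3/\alpha$, and the geometric series $\sum_{m\ge 0}e^{-\alpha m}$.

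The steps I expect to be delicate are the exact ``magic factor'' bounds on the Stein solution, and above all the bookkeeping in the telescoping step: a careless estimate produces a sum of individual covariances rather than the clean combination $\mathbb V(W)-\lambda$, so the point is to split $\sum_{i\ne j}\mathrm{Cov}(X_i,X_j)$ into near and far ranges and to show the far range is controlled by $e^{-\alpha L}$. The one-sided nature of \eqref{eq:ChM} versus the two-sided neighbourhood $\mathcal B_{1,i-L}\vee\mathcal B_{i+L,\infty}$ also needs a short conditioning argument, but that part is routine.
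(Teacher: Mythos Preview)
Your assessment is correct: the paper does not prove Proposition~\ref{prop:chen} at all---it is simply quoted from \cite{Chen} as a black box and immediately applied in the proof of Lemma~\ref{le:wfix}. Your opening sentence already identifies this, so there is nothing to compare against; the sketch you provide of the Chen--Stein argument (Stein equation, magic-factor bounds on the solution, near/far decomposition with truncation scale $L\asymp(\log n)/\alpha$, and the $\phi$-mixing bound on the far part) is a faithful outline of Chen's original proof and is not something the present paper attempts or needs.
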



\begin{proposition}[\protect{\cite[Lemma 5.2]{Chen}}] \label{ChenB}
For any real positive $\lambda$ and $t$ and for any $h:\mathbb N\to \mathbb R$ with $|h|\leq 1$, 
\[\abs{P(\lambda,h)-P(t,h)}\le 2|\lambda -t|.\]
 \end{proposition}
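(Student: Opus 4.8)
\textbf{Proof plan for Proposition~\ref{ChenB}.}

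The plan is to reduce the claim to the total variation distance between two Poisson random variables and then to estimate that distance directly. First I would observe that by the very definition of $P(\lambda,h)$ and $P(t,h)$ as the expectations of $h$ under Poisson laws with parameters $\lambda$ and $t$ respectively, the quantity $|P(\lambda,h)-P(t,h)|$ is bounded, uniformly over all $h$ with $|h|\le 1$, by the total variation distance between $\Po{\lambda}$ and $\Po{t}$, say $\lambda\le t$ without loss of generality. So it suffices to show $d_{TV}(\Po{\lambda},\Po{t})\le t-\lambda$.

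For this I would use the standard coupling of Poisson variables: if $U\sim\Po{\lambda}$ and $V\sim\Po{t-\lambda}$ are independent, then $U+V\sim\Po{t}$, and the pair $(U,\,U+V)$ is a coupling of $\Po{\lambda}$ with $\Po{t}$. Hence
\[
d_{TV}(\Po{\lambda},\Po{t})\le \prob(U\ne U+V)=\prob(V\ge 1)=1-e^{-(t-\lambda)}\le t-\lambda .
\]
Combining this with the first paragraph gives $|P(\lambda,h)-P(t,h)|\le t-\lambda=|\lambda-t|$, which is in fact a factor of two stronger than the stated bound; the constant $2$ in the statement is just a convenient slack. Alternatively, if one prefers to avoid the coupling language, one can write $p(t,j)-p(\lambda,j)=\int_\lambda^t \frac{d}{ds}p(s,j)\,ds$, note that $\frac{d}{ds}p(s,j)=p(s,j-1)-p(s,j)$ (with $p(s,-1)=0$), and then sum $|h(j)|\le 1$ times these differences; the telescoping structure of $\sum_j |p(s,j-1)-p(s,j)|$ together with unimodality of the Poisson mass function yields the same linear bound.

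There is essentially no obstacle here: the only mild point to be careful about is the interchange of summation (over $j$) and the manipulations above, which is justified since all the series involved converge absolutely (the Poisson mass functions are summable and $|h|\le 1$). Since this is quoted verbatim as \cite[Lemma 5.2]{Chen}, I would simply cite it and, if desired, include the one-line coupling argument above for completeness.
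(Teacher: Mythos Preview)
Your approach is correct and self-contained, whereas the paper does not prove this proposition at all: it is simply quoted from \cite[Lemma~5.2]{Chen} and used as a black box. So there is no ``paper's proof'' to compare with; supplying the coupling argument is a genuine addition.

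One small correction: your claim that the coupling yields the sharper bound $|\lambda-t|$ rather than $2|\lambda-t|$ rests on a normalization slip. With the convention used in the paper, namely $d_{TV}(X,Y)=\sup_{|h|\le 1}|\E h(X)-\E h(Y)|$, the coupling inequality reads $d_{TV}(X,Y)\le 2\,\prob(X\ne Y)$, not $\prob(X\ne Y)$, because $|h(X)-h(Y)|$ can be as large as $2$ on the event $\{X\ne Y\}$. Thus the coupling $(U,U+V)$ with $V\sim\Po{t-\lambda}$ gives
\[
|P(\lambda,h)-P(t,h)|\le 2\,\prob(V\ge 1)=2\bigl(1-e^{-(t-\lambda)}\bigr)\le 2\,|t-\lambda|,
\]
which is exactly the stated bound. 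That the constant $2$ is not slack can be seen by taking $h(0)=1$ and $h(j)=-1$ for $j\ge 1$, which gives $P(\lambda,h)-P(t,h)=2(e^{-\lambda}-e^{-t})\sim 2(t-\lambda)$ as $\lambda,t\to 0$. With this adjustment your argument is complete.
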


\begin{lemma}\label{L:n} Fix $S\subseteq \mathbb R^+$  a finite union of bounded intervals. For each $k\in \mathbb N$ and $w \in \Omega^k$ the quantity     $n(k,w)=\#\J_w(S)$  satisfies 
\[\frac{\log(n(k,w))}{n(k,w)}=O(k\rho^k).\]
\end{lemma}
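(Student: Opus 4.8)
The plan is to deduce the bound from two facts already in hand: the counting estimate \eqref{eq: measure of S} and the contraction ratio property \eqref{eq:rhoK}, together with the elementary observation that $t\mapsto(\log t)/t$ is decreasing for $t\ge e$. So the only real input is that $n(k,w)$ is \emph{at least} exponentially large in $k$, uniformly in $w$.

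First I would obtain a uniform lower bound on $n(k,w)$. Let $m$ be the number of intervals making up $S$. Inequality \eqref{eq: measure of S} gives $n(k,w)=\#\J_{w,S}\ge |S|/\mu_k(w)-m$, and the contraction ratio property $\mu_k(w)\le K\rho^k$ then yields
\[
n(k,w)\ \ge\ \frac{|S|}{K}\,\rho^{-k}-m\ =:\ N_k .
\]
Since $S$ is a finite union of nonempty bounded intervals we have $|S|>0$, and since $\rho\in(0,1)$ we get $N_k\to\infty$; in particular there is $k_0$ with $N_k\ge\max\{\,e,\ \tfrac{|S|}{2K}\rho^{-k}\,\}$ for all $k\ge k_0$.

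Next I would invoke monotonicity: because $(\log t)/t$ is decreasing on $[e,\infty)$ and $n(k,w)\ge N_k\ge e$ for $k\ge k_0$, we get $\log n(k,w)/n(k,w)\le \log N_k / N_k$ for every $w\in\Omega^k$. Finally, for $k\ge k_0$ we have $1/N_k\le \tfrac{2K}{|S|}\rho^k$, while $\log N_k\le \log\big(\tfrac{|S|}{K}\rho^{-k}\big)=\log\tfrac{|S|}{K}+k\log(1/\rho)=O(k)$; multiplying these bounds gives $\log N_k / N_k = O(k\rho^k)$, with implied constant depending only on $S$ (through $|S|$ and $m$) and on $K,\rho$. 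Since this bound does not involve $w$, it holds uniformly over $w\in\Omega^k$, which is exactly the claim.

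There is essentially no obstacle here; the one point to watch is the direction of the monotonicity. For an infinite alphabet $\mu_k(w)$ need not be bounded below, so $n(k,w)$ can be arbitrarily large — but larger $n(k,w)$ only makes $(\log n)/n$ smaller, so it suffices to control the smallest value $N_k$ that $n(k,w)$ can take, and that is precisely what the contraction ratio supplies.
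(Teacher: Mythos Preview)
Your argument is correct and follows essentially the same route as the paper: both use the lower bound $n(k,w)\ge |S|/(K\rho^k)-m$ from \eqref{eq: measure of S} and \eqref{eq:rhoK}, note that this exceeds $|S|/(2K\rho^k)$ for large $k$, and then apply the monotonicity of $t\mapsto(\log t)/t$. Your write-up is in fact a bit more careful, explicitly noting the threshold $t\ge e$ for that monotonicity.
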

\begin{proof}[Proof of Lemma~\ref{L:n}]  We use identity \eqref{eq: measure of S}. By the existence of a contraction ratio, there is a constant $K>0$, for which
\[n(k,w)=\#\J_w(S)\ge \frac{|S|}{K\rho^k}- m \]
where $m$ is the number of intervals of $S$. We are assuming that $m$ does not depend on $k$. Since $\rho^k\to 0$ as $k\to \infty$, for $k$ large enough, 
\[\frac{\abs{S}}{K\rho^k}-m\ge \frac{\abs{S}}{2K\rho^k},\]
and $\log n/n$ is decreasing as $n\to \infty$, 
\[\frac{\log(n(k,w))}{  n(k,w)}\le \frac{ \log\pare{{\abs{S}}/({2K \rho^k})}}{\abs{S}/(2K \rho^k)} =O(k \rho^k).
\]
\end{proof}

The following result extends the one in~\cite{AbadiVergne} to an alphabet with infinitely many symbols.

\begin{lemma}[ {{Total variation distance between
$\E_{\mu}[h(M^w_k(S))]$ and $P({\abs{S}}, h)$ }}]\label{le:wfix} 
Let $\mu$ be an invariant and exponentially $\psi$-mixing  measure
and let  $S\subset \mathbb R $ be a finite union of bounded intervals.  There exists  $k_0\in \mathbb N$ and $C>0$ such that for every $k\ge k_0$ and for every $w \in \Omega^k$,  for any $h:\mathbb N \to \mathbb R$ such that $|h|\le 1$, 
\[\abs{\E_{\mu}[h(M^w_k(S))]
- P({\abs{S}} ,h)} \le C\pare{ k \rho^k+\sum_{\ell\in \pi_w} \rho^\ell } .\]
The constant $C$ depends on the measure $|S|$ and the number of intervals of the set $S$.     
\end{lemma}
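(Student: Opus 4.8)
\emph{Strategy.} The plan is to view $M_k^w(S)$ as the sum $W=\sum_{i\in\J_{w,S}}I_i^w$ of $\{0,1\}$-valued random variables, apply Chen's Poisson approximation bound (Proposition~\ref{prop:chen}) to it, control the three quantities occurring there --- namely $\lambda=\E_\mu[W]$, the difference $\mathds{V}_\mu(W)-\lambda$, and $\log n/n$ --- using the moment estimates of Lemma~\ref{Le:exp} and the counting estimate of Lemma~\ref{L:n}, and finally replace the Poisson parameter $\lambda$ produced by Chen's bound by $\abs{S}$ using Proposition~\ref{ChenB}.

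\emph{Set-up.} Fix $k$ and $w\in\Omega^k$; we may assume $\mu_k(w)>0$, since by invariance a word with $\mu_k(w)=0$ has $I_i^w=0$ almost surely for every $i$, so it never occurs and plays no role. Enumerate $\J_{w,S}=\{p_1<\dots<p_n\}$ with $n=n(k,w)=\#\J_{w,S}$, and put $X_t=I_{p_t}^w$, $W=\sum_{t=1}^n X_t=M_k^w(S)$ and $\lambda=\E_\mu[W]$. By the contraction ratio, $n\ge\abs{S}/(K\rho^k)-m\to\infty$, so there is $k_0$ with $n\ge 3$ for all $k\ge k_0$, as Proposition~\ref{prop:chen} requires.

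\emph{Applying Chen and inserting the estimates.} The $X_t$ take values in $\{0,1\}$ and, by invariance~\eqref{eq:invariannt}, are identically distributed, while the exponentially $\phi$-mixing condition~\eqref{eq:ChM} for $(X_t)_t$ follows from the exponentially $\psi$-mixing of $\mu$ (as noted just before Proposition~\ref{prop:chen}; the uniformity in $k$ and $w$ of the rate is the delicate point, discussed below). Lemma~\ref{Le:exp}(1) gives $\lambda=\abs{S}+O(\rho^k)$, so for $k\ge k_0$ both $\min(\lambda^{-1/2},1)$ and $(\lambda+1)^2$ are $O(1)$ with implied constants depending only on $\abs{S}$; Lemma~\ref{Le:exp}(3) gives $\mathds{V}_\mu(W)-\lambda=O(k\rho^k)+O\pare{\sum_{\ell\in\pi_w}\rho^\ell}$; and Lemma~\ref{L:n} gives $\log n/n=O(k\rho^k)$. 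Substituting these into Proposition~\ref{prop:chen} yields, for every $h\colon\N\to\R$ with $\abs{h}\le 1$,
\[
\abs{\E_\mu[h(M_k^w(S))]-P(\lambda,h)}\le C'\pare{k\rho^k+\sum_{\ell\in\pi_w}\rho^\ell},
\]
where $C'$ depends only on the mixing rate $\alpha$, on $\abs{S}$, and on the number $m$ of intervals of $S$. Finally Proposition~\ref{ChenB} gives $\abs{P(\lambda,h)-P(\abs{S},h)}\le 2\abs{\lambda-\abs{S}}=O(\rho^k)$, and adding this to the previous display and absorbing all constants into a single $C$ (valid for $k\ge k_0$) proves the lemma.

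\emph{Main obstacle.} Apart from the bookkeeping, the real difficulty is to verify the exponentially $\phi$-mixing hypothesis of Proposition~\ref{prop:chen} for $(X_t)_t$ with a rate $\alpha$ and constant uniform in $k$ and $w$: because $I_i^w$ depends on the length-$k$ window $x[i,i+k)$, two indicators at positions closer than $k$ may be strongly dependent. This short-range clustering is governed by the periods of $w$ --- indeed $I_i^wI_j^w$ can be nonzero with $0<\abs{i-j}<k$ only when $\abs{i-j}\in\pi_w$ --- and it is precisely what produces the $\sum_{\ell\in\pi_w}\rho^\ell$ term in $\mathds{V}_\mu(W)-\lambda$, hence in the final bound. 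I therefore expect the argument to split into a dichotomy: if $w$ has a short period then $\sum_{\ell\in\pi_w}\rho^\ell$ is bounded below by a positive constant and the asserted inequality is trivial once $C$ is enlarged; otherwise the relevant windows do not clump and $(X_t)_t$ is exponentially $\phi$-mixing with a rate depending only on $\mu$, so Proposition~\ref{prop:chen} applies as above. Making this dichotomy quantitative so the two regimes overlap, and checking that the constants from Lemmas~\ref{Le:exp} and~\ref{L:n} (which depend only on $S$) combine with $C_1(\alpha)$ into one constant depending only on $\abs{S}$ and $m$, is what needs care.
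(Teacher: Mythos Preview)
Your approach is exactly the paper's: write $M_k^w(S)=\sum_{i\in\J_{w,S}}I_i^w$, apply Proposition~\ref{prop:chen}, feed in the estimates $\lambda=\abs{S}+O(\rho^k)$ and $\mathds{V}_\mu(W)-\lambda=O(k\rho^k)+O\!\pare{\sum_{\ell\in\pi_w}\rho^\ell}$ from Lemma~\ref{Le:exp}, the bound $\log n/n=O(k\rho^k)$ from Lemma~\ref{L:n}, and then shift the Poisson parameter from $\lambda$ to $\abs{S}$ via Proposition~\ref{ChenB}.

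Regarding your ``main obstacle'': the paper does not treat it as one. It simply invokes the remark preceding Proposition~\ref{prop:chen} that exponential $\psi$-mixing of $\mu$ implies exponential $\phi$-mixing~\eqref{eq:ChM} for the indicators, and applies Chen with $C_1(\alpha)$ taken as a constant independent of $k$ and $w$; no dichotomy on the periods of $w$ appears, and the short-range clustering you describe is absorbed entirely into the variance term $\mathds{V}_\mu(W)-\lambda$ already present in Chen's bound. So your caution here goes beyond what the paper provides; whether the uniformity of $\alpha$ deserves a closer look is a fair question, but the paper's proof proceeds exactly as in your first two paragraphs and stops there.
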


\begin{proof}[Proof of Lemma~\ref{le:wfix}] 
Since \eqref{eq:our-mixing}
implies  \eqref{eq:ChM}, we apply Proposition~\ref{prop:chen}.
Fix the set $S\subset \mathbb R$. For each  $k\in \mathbb N$ and $w\in \Omega^k$, we consider   the  random variables
$M_k^w(S)$, $k\ge 1$.  
Since we assumed \eqref{eq:invariannt} the measure $\mu$ is invariant, so 
 the variables $I_i(x,w)$ are identically distributed.
Consider
\[
M_k^w(S) = \sum_{i \in \J_{w,S}}   
I_i(x,w), \quad
\lambda=\lambda(k,w) =\E_\mu\corch{M_k^w(S)}  
\text{ and } n= n(k,w) = \#\J_{w,S}.
\]
By Lemma~\ref{Le:exp},
$
\E_\mu[M^w_k]=\abs{S}+O(\rho^k)$
and
\[\abs{\mathbb V_{\mu}[M^w_k]-\E_\mu[M^w_k]}
=O\pare{ k \rho^k+\sum_{\ell \in \pi_w} \rho^\ell }.
\]
Since the variables $I_i(x,w)$ are 
exponentially $\psi$-mixing, 
for $k$ large enough, 
$n(k,w)\ge 3$. 
Using the bound on $\log(n(k,w))/n(k,w)$  in Lemma~\ref{L:n},
\begin{align*} 
    \abs{\E_\mu\corch{h(M_k^w)} - P(\lambda,h))}
    & \leq C_1(\alpha) \pare{\mathbb V[M_k^w] - \lambda + (\lambda+1)^2 \frac{\log n}{n}} 
  \\& =  
   O\pare{k \rho^k+\sum_{\ell \in \pi_w} 
   \rho^\ell }. 
\end{align*}
And by Proposition~\ref{ChenB},
\begin{align*}
    \abs{\E_\mu[h(M^w_k)] - P(\abs{S},h)}
    &\leq \abs{\E_\mu\corch{h(M_k^w)} - P(\lambda,h))}+\abs{P(\lambda,h)-P(\abs{S},h)} 
\\
  &  =
    O\pare{  k\rho^k+\sum_{\ell \in \pi_w} \rho^\ell}.
\end{align*}  
\end{proof}

\begin{lemma} \label{period-bound} Let $\ell$ and $k$ be two natural numbers so that $1\le \ell<k$. Let $W_k^\ell$ be the set of words $w\in \Omega^k$ with period $\ell$. 
 Then,
$$ 
\mu_k(W_k^\ell) = O(\rho^{k-\ell}).
$$
\end{lemma}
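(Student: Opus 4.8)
The plan is to exploit the structure of a word with period $\ell$: if $w\in\Omega^k$ has period $\ell$, then $w$ is completely determined by its first $\ell$ symbols, namely $w_i=w_{(i-1)\bmod \ell\ +1}$ for all $i$. So $w = u u u \cdots$ truncated to length $k$, where $u=w[1,\ell]$. In particular, the cylinder $C_k(w)$ is contained in the event that the word $u$ occurs at position $1$ \emph{and} at position $1+\ell$ (indeed at all positions $1+t\ell$ up to $k-\ell$), and we only need two occurrences to get a good bound. Concretely, $C_k(w)\subseteq\{x: I_1(x,u)=1\}\cap\{x: I_{1+\ell}(x,u)=1\}$, but that intersection is not quite enough because the two copies overlap when $\ell<\abs{u}$; instead I would use that $w$ also has period $\ell$ all the way across, so in fact $C_k(w)$ forces the occurrence of $u$ at position $1$ and the occurrence of $w[\ell+1,k]$ (a word of length $k-\ell$, which itself is just another truncation of the periodic pattern) starting at position $\ell+1$. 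The point is that $\mu_k(w)\le \mu(x: I_1(x,u)\,I_{\ell+1}(x,w[\ell+1,k])=1)$.

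First I would fix $\ell<k$ and write $u=w[1,\ell]$ and $v=w[\ell+1,k]$, so $\abs{u}=\ell$, $\abs{v}=k-\ell$, and $C_k(w)\subseteq\{x: I_1(x,u)=1,\ I_{\ell+1}(x,v)=1\}$; hence $\mu_k(w)\le \mu(x: I_1(x,u)I_{\ell+1}(x,v)=1)$. Now I cannot directly apply the mixing inequality~\eqref{eq:our-mixing} since it requires the gap $j-i>\abs{u}$, i.e.\ $\ell+1>\ell$, which holds with equality-plus-one exactly when $\ell+1>\abs{u}=\ell$ — so actually the gap condition $j>i+\abs{u}$ reads $\ell+1>1+\ell$, which fails. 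The clean fix: apply the bounded distortion property~\eqref{bd} instead (which is stated for \emph{all} $i,\ell\ge 1$ with no gap restriction), giving $\mu(x: I_1(x,u)I_{\ell+1}(x,v)=1)\le R\,\mu_\ell(u)\,\mu_{k-\ell}(v)\le R\cdot 1\cdot \mu_{k-\ell}(v)$, and then the contraction ratio~\eqref{eq:rhoK} gives $\mu_{k-\ell}(v)\le K\rho^{k-\ell}$. Thus for each individual word $w$ with period $\ell$ we get $\mu_k(w)\le RK\rho^{k-\ell}$.

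That bounds a single cylinder, but the lemma asks for $\mu_k(W_k^\ell)=\sum_{w\in W_k^\ell}\mu_k(w)$, a sum over all words with period $\ell$. The key observation is that there are at most $\#\Omega^\ell$ such words if $\Omega$ is finite — but $\Omega$ may be countably infinite, so I cannot bound the \emph{number} of words. Instead I would sum directly: every $w\in W_k^\ell$ is determined by $u=w[1,\ell]\in\Omega^\ell$, and for each such $u$ there is at most one $w$, with $v=w[\ell+1,k]$ equal to the length-$(k-\ell)$ truncation of the $\ell$-periodic extension of $u$. So $\sum_{w\in W_k^\ell}\mu_k(w)\le \sum_{u\in\Omega^\ell}R\,\mu_\ell(u)\,\mu_{k-\ell}(v_u)\le RK\rho^{k-\ell}\sum_{u\in\Omega^\ell}\mu_\ell(u)=RK\rho^{k-\ell}$, since $\mu_\ell$ is a probability measure on $\Omega^\ell$ and $\sum_u\mu_\ell(u)=1$. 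This gives $\mu_k(W_k^\ell)\le RK\rho^{k-\ell}=O(\rho^{k-\ell})$, as claimed.

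The main obstacle I anticipate is precisely the infinite-alphabet issue: one must resist the temptation to bound the count of periodic words and instead keep the factor $\mu_\ell(u)$ attached and sum it to $1$. A secondary point requiring care is the gap condition in the mixing hypothesis — since the period $\ell$ can be smaller than $\abs{u}=\ell$ is impossible but the overlap still means~\eqref{eq:our-mixing} does not apply verbatim, so routing through the bounded distortion property~\eqref{bd}, which carries no gap restriction, is the right move. (Alternatively one could split off the largest block with gap $>\ell$ and iterate, but that is unnecessarily complicated.)
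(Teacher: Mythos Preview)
Your proposal is correct and follows essentially the same route as the paper: write $w=u\,v$ with $u=w[1,\ell]$ and $v=w[\ell+1,k]$ the periodic extension, apply the bounded distortion property~\eqref{bd} to get $\mu_k(w)\le R\,\mu_\ell(u)\,\mu_{k-\ell}(v)\le RK\,\mu_\ell(u)\,\rho^{k-\ell}$, and then sum over $u\in\Omega^\ell$ using $\sum_u\mu_\ell(u)=1$. Your observation that one must keep the factor $\mu_\ell(u)$ through to the summation (rather than counting periodic words) is exactly the point, and your discussion of why bounded distortion rather than the gap-restricted mixing bound is the right tool is a helpful clarification the paper leaves implicit.
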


\begin{proof}[Proof of Lemma~\ref{period-bound}]
For a word $v \in \Omega^\ell$ and an integer $k \geq 0$, define $w=\operatorname{ext}(v, k)$  as the unique word $w$ in $\Omega^k$ such that 
\[w_i = v_{j}\quad \text{ if }\quad  i \equiv j \mod \ell\]
for any $i,j\in \mathbb N$, $1\le i \le k$ and $1\le j\le \ell$.

With the bounded distortion property 
and using the contraction ratio, we have, for any $w\in W_k^\ell$ there exists $v \in \Omega^\ell$ such that $w = v \operatorname{ext}(v,k-\ell)$ and therefore
\[
\mu_k(w) = \mu_k(v\; \operatorname{ext}(v,k-\ell))\le R \mu_{\ell}(v)\mu_{k-\ell}(\operatorname{ext}(v,k-\ell))\le RK \mu_{\ell}(v)\rho^{k-\ell}.
\]
In order to bound $\mu_k(W_k^\ell)$, observe that there is a bijective function $f: W_k^\ell \to \Omega^\ell$ which is simply defined as $f(w) = w[1 \ldots \ell]$.
Then,
\begin{align*}
\mu_k(W_k^\ell) 
    = \sum_{w \in W_k^\ell} \mu_k(w)
    = \sum_{v \in \Omega^\ell} \mu_k\pare{v \; \operatorname{ext}(v,k-\ell)}\le RK \rho^{k-\ell}\sum_{v \in \Omega^\ell}\mu_\ell(v)=RK\rho^{k-\ell}.
\end{align*}
\end{proof}

\begin{lemma}[$\mu\times\mu_k$-expectation on $M_k(S) $ in $\Omega^\N\times\Omega^k$]\label{le:poisfi}
Let $\mu$ be an invariant and exponentially $\psi$-mixing measure on $\Omega^\mathbb N$ with a contraction ratio $ \rho\in (0,1) $. For any set $S\subset \mathbb R$ which is a finite union of bounded intervals and for any $h:\mathbb N \to \mathbb R$ such that $|h|\le 1$,  the sequence of random variables  $\left(M_k(S)\right)_{k\ge 1}$ satisfies that
\[
\abs{\E_{\mu\times\mu_k}[h(M_k(S))]-P(|S|,h)}= O(k\rho^k).
\]
The constant hidden in the $O$-term depends on the measure $|S|$ and the number of intervals of the set $S$. 
\end{lemma}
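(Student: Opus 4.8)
\textbf{Proof plan for Lemma~\ref{le:poisfi}.}
The plan is to average the pointwise bound of Lemma~\ref{le:wfix} over $w\in\Omega^k$ with respect to $\mu_k$, and then control the contribution coming from words with small periods using Lemma~\ref{period-bound}. First I would write
\[
\abs{\E_{\mu\times\mu_k}[h(M_k(S))]-P(|S|,h)}
\le \sum_{w\in\Omega^k}\mu_k(w)\,\abs{\E_{\mu}[h(M_k^w(S))]-P(|S|,h)},
\]
using the triangle inequality together with $\sum_w\mu_k(w)=1$ and the fact that $P(|S|,h)$ does not depend on $w$. By Lemma~\ref{le:wfix}, for $k\ge k_0$ each summand is at most $C\pare{k\rho^k+\sum_{\ell\in\pi_w}\rho^\ell}$, so the right-hand side is bounded by
\[
C\, k\rho^k \;+\; C\sum_{w\in\Omega^k}\mu_k(w)\sum_{\ell\in\pi_w}\rho^\ell .
\]

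The main work is estimating the double sum $\sum_{w}\mu_k(w)\sum_{\ell\in\pi_w}\rho^\ell$. I would exchange the order of summation: since $\ell\in\pi_w$ exactly when $w$ has period $\ell$, i.e. $w\in W_k^\ell$, this equals
\[
\sum_{\ell=1}^{k-1}\rho^\ell \sum_{w\in W_k^\ell}\mu_k(w)
=\sum_{\ell=1}^{k-1}\rho^\ell\,\mu_k(W_k^\ell).
\]
Now Lemma~\ref{period-bound} gives $\mu_k(W_k^\ell)=O(\rho^{k-\ell})$, so the $\ell$-th term is $O(\rho^\ell\cdot\rho^{k-\ell})=O(\rho^k)$, and summing over the at most $k$ values of $\ell$ yields $O(k\rho^k)$. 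Combining the two contributions gives the claimed bound $O(k\rho^k)$, with the hidden constant depending only on $|S|$ and the number of intervals of $S$ (these being the only parameters entering $C$ in Lemma~\ref{le:wfix} and the implied constant in Lemma~\ref{period-bound}). For the finitely many $k<k_0$ the statement is vacuous up to enlarging the constant, since each such $k$ contributes a bounded error absorbed into the $O$-term.

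The only mild subtlety — and the step I would be most careful about — is the interchange of summations and the uniformity of the constant $C$ from Lemma~\ref{le:wfix}: that lemma already provides a single $C$ and a single threshold $k_0$ valid for all $w\in\Omega^k$, so no issue arises there, but one should note that the inner sum $\sum_{\ell\in\pi_w}\rho^\ell$ is a finite (hence absolutely convergent) sum bounded by $\sum_{\ell\ge1}\rho^\ell<\infty$, which justifies Tonelli-style reordering even though $\Omega^k$ may be countably infinite. Everything else is a routine geometric-series estimate.
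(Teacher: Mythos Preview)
Your proposal is correct and follows essentially the same route as the paper's own proof: reduce to the pointwise estimate of Lemma~\ref{le:wfix} via the triangle inequality, then swap the order of summation and invoke Lemma~\ref{period-bound} to bound $\sum_{\ell=1}^{k-1}\rho^\ell\mu_k(W_k^\ell)=O(k\rho^k)$. Your added remarks on the threshold $k_0$ and on justifying the interchange of sums are welcome points of care that the paper leaves implicit.
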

\begin{proof}[Proof of Lemma~\ref{le:poisfi}]

We deal with the product measure $\mu\times \mu_k$ on $\Omega\times \Omega_k$. By definition, 
\[
{\E_{\mu\times\mu_k}[h(M_k(S))](h)}=\sum_{w\in \Omega^k}\mu_k(w)\E_{\mu}[h(M_k^w(S))].
\]
So, writing 
$P(|S|,h)=\sum_{w\in \Omega^k}\mu(w)P(|S|,h)$ 
and using the triangular inequality, we have
\[
\abs{\E_{\mu\times\mu_k}[h(M_k(S))]-
P(|S|,h)}\le {\sum_{w\in \Omega^k}\mu_k(w)\abs{\E_\mu[h(M_k^w(S))]-
P(|S|,h)}}.\]
The bounds on $\abs{\E[h(M^w(S))]-P(|S|,h)}$ given in Lemma~\ref{le:wfix} yield
\[
\abs{\E_{\mu\times\mu_k}[h(M_k(S))]-P(|S|,h)}= O(k\rho^k) + O\pare{\sum_{w\in \Omega^k}\mu_k(w) 
\sum_{\ell\in \pi_w}  \rho^\ell}.
\]
Now, the following holds, with the help of Lemma~\ref{period-bound}, 
\[\sum_{w\in \Omega^k}\mu_k(w) 
\sum_{\ell\in \pi_w} 
\rho^\ell= \sum_{\ell=1}^{k-1} \sum_{w \in W_k^\ell} \mu_k(w) \rho^\ell=\sum_{\ell=1}^{k-1} \rho^\ell \mu_k\pare{W_k^\ell}\le O\pare{\sum_{\ell=1}^{k-1} \rho^\ell \rho^{k - \ell} } 
     = O(k \rho^k).
 \]
Finally,
\[\abs{\E_{\mu\times\mu_k}[h(M_k(S))]-P({|S|},h)}= O(k\rho^k).\]
\end{proof}

\begin{lemma}
[The annealed result]
\label{lemma:annealed}
Let $\mu$ be an invariant and exponentially $\psi$-mixing measure on $\Omega^{\mathbb N}$. The sequence of random measures $(M_k(.))_{k \geq 1}$ on $\R^+$ converges in distribution to a Poisson point process on $\R^+$ as $k$ goes to infinity. More precisely, for any Borel set 
 $S\subseteq \R^+$,
 \[\mu\times \mu_k\pare{(x,w)\in \Omega^\mathbb N \times \Omega^k: M_k(x,w)(S)=j}\to
 p(|S|,j)
 \text{ as}\ k\to \infty.\]
 \end{lemma}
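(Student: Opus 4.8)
\textbf{Proof plan for Lemma~\ref{lemma:annealed}.}
The plan is to invoke Proposition~\ref{kallenberg}, so it suffices to verify its two hypotheses for the sequence of point processes $X_k(\cdot)=M_k(\cdot)$ on the product space $(\Omega^\mathbb N\times\Omega^k,\mu\times\mu_k)$, taking $Y(\cdot)$ to be the Poisson point process on $\R^+$, and only for sets $S$ that are finite unions of disjoint intervals with rational endpoints. Once both conditions hold for all such $S$, Proposition~\ref{kallenberg} upgrades this to convergence in distribution for every Borel set $S$, and in particular gives $\mu\times\mu_k(M_k(S)=j)\to p(|S|,j)$, which is the displayed claim.

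For the first hypothesis, I would simply quote Lemma~\ref{le:poisfi} with $h$ the identity on a truncation, or more directly Lemma~\ref{Le:expprod}, which already gives $\E_{\mu\times\mu_k}[M_k(S)]=|S|+O(\rho^k)$. Since $\rho\in(0,1)$, this yields $\lim_{k\to\infty}\E_{\mu\times\mu_k}[M_k(S)]=|S|=\E[Y(S)]$, so condition (1) of Proposition~\ref{kallenberg} holds (indeed with equality in the limit). For the second hypothesis, I would take $h=\mathbbm 1_{\{0\}}$, i.e. $h(j)=1$ if $j=0$ and $h(j)=0$ otherwise; this is a bounded function with $|h|\le 1$, and $\prob(X=0)=\E[h(X)]$ for any $\mathbb N$-valued random variable $X$, while $P(|S|,h)=e^{-|S|}=\prob(Y(S)=0)$. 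Applying Lemma~\ref{le:poisfi} with this $h$ gives
\[
\abs{\,\mu\times\mu_k\pare{M_k(S)=0}-e^{-|S|}\,}=\abs{\E_{\mu\times\mu_k}[h(M_k(S))]-P(|S|,h)}=O(k\rho^k)\to 0,
\]
which is exactly condition (2). Here I use that $S$ is bounded because it is a finite union of bounded intervals, so Lemma~\ref{le:poisfi} applies.

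With both conditions of Proposition~\ref{kallenberg} verified for every finite union $S$ of disjoint rational intervals, the proposition gives $M_k(\cdot)\to Y(\cdot)$ in distribution, which by definition means $\mu\times\mu_k(M_k(S)=j)\to\prob(Y(S)=j)=p(|S|,j)$ for every Borel set $S$ and every integer $j\ge 0$. I do not anticipate a genuine obstacle here: the substantive work has already been carried out in Lemmas~\ref{Le:exp}, \ref{le:wfix}, \ref{period-bound} and \ref{le:poisfi}, and this lemma is essentially a bookkeeping step that packages those estimates into the form demanded by Kallenberg's criterion. The only point requiring a small amount of care is making sure the conditions of Proposition~\ref{kallenberg} are checked for the right class of test sets (finite unions of disjoint intervals with rational endpoints) and that the chosen test function $h=\mathbbm 1_{\{0\}}$ has the properties ($|h|\le 1$, boundedness of $S$) needed to apply Lemma~\ref{le:poisfi}.
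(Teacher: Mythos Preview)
Your proposal is correct and follows essentially the same approach as the paper: verify the two hypotheses of Proposition~\ref{kallenberg} using Lemma~\ref{Le:expprod} for the expectation condition and Lemma~\ref{le:poisfi} with $h=\mathbbm 1_{\{0\}}$ for the void-probability condition. The paper's proof is virtually identical, differing only in presentation.
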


\begin{proof}[Proof of Lemma~\ref{lemma:annealed}]
 To prove it we apply Kallenberg's result stated in Proposition~\ref{kallenberg} for the product measure $\mu\times \mu_k$ on $\Omega\times \Omega_k$.
 In order to prove the first condition in Kallenberg's Theorem, we remark that 
 the expectation of the Poisson distribution of parameter $\abs{S}$ is $|S|$. On the other hand, the expectation of $M_k(S)$ is displayed in Lemma~\ref{Le:expprod}:  $\E_{\mu\times\mu_k}\corch{M_k(S)} = |S| + O(\rho^k)$, which implies that $$
 \limsup_{k \to \infty} \E_{\mu\times \mu_k}\corch{M_k(S)} \leq  |S|.
 $$
 Now, we prove the second condition of Proposition~\ref{kallenberg}. 
 Lemma~\ref{le:poisfi}, specified on   $h=\mathbbm{1}_{\{0\}}$,   implies that 
 $$
 \lim_{k \to \infty} \E_{\mu\times \mu_k}\corch{\mathbbm{1}_{\{0\}}(M_k(S))}  =p(\abs{S},0)=e^{-|S|}
 $$
 Then, both conditions of  Proposition~\ref{kallenberg} hold for every finite union of intervals $S$ and we can conclude that $\pare{M_k(.)}_{k \geq 1}$ converges in distribution to a  Poisson point process on $\R^+$.
 \end{proof}

\subsection{The quenched result}

Following a similar strategy as the one from~\cite[Proposition 3]{abm},
 we prove a concentration result to show that $M_k^x$ converges to Poisson for $\mu$-almost all $x\in\Omega^\N$.
We need an 
inequality that holds for countable alphabets  and functions depending on countably many variables with 
the bounded differences property 
 known as the Lipschitz condition for a weighted Hamming distance, see Proposition \ref{lemma:KontRam}. As mentioned in the Introduction, for this we adapt the martingale difference method given in \cite[Theorem 3.3.1]{KontoThesis} and \cite[Theorem 1.1]{KontorovichRamanan2008}. The former deals with a finite alphabet and weighted Hamming distances and the latter deals with
 a countable alphabet but a constant Hamming distance.

    We need to deal with infinitely many random variables $X_1, X_2, \ldots $ 
    defined on a space $\mathbb{X }$ taking values in the countable  alphabet $\Omega,$
    and a function 
    $\varphi: \Omega^\N \to \R$. 
    We  work with finitely many $X_1,\ldots,X_N$ and 
    $\varphi_N: \Omega^N \to \R$ 
    for $N$ large and then take $N \to \infty.$
    For each $N$, consider the filtration         $$
    {\cal F}_N = \sigma(X_1,\ldots, X_N) 
    $$
    of sigma-algebras generated by the first $N$ random variables and the Azuma--Hoeffding coefficients that are defined in terms of martingale differences. 

\begin{definition}[Azuma--Hoeffding  coefficients $d_i$]
Let $ X_1, {X_2}, X_3, \dots$  be a sequence of random variables defined on $(\mathbb X, {\cal F}, \prob{ })$
taking values on some $\Omega.$ Consider the filtration of sub-sigma-algebras~${\cal F}_i$, \begin{align*}
\set{\emptyset, \mathbb X} = {\cal F}_0\subseteq   
{\cal F}_1 \subseteq \ldots \subseteq  {\cal F}.
\end{align*}
and $\mathcal F$ is the smallest sigma-algebra containing all the others.
Consider a function  $\varphi :\Omega^{\N} \to \R$. For  $X=X_1 X_2\cdots$, we define, for each $i\in \mathbb N $, 
\begin{align*}
    V_i(\varphi) = \E\corch{\varphi(X) \, | \, {\cal F}_i } - 
    \E\corch{\varphi(X) \, | \, {\cal F}_{i-1} }
\end{align*}
and we  define  the Azuma--Hoeffding  coefficients 
\begin{align}\label{d:AzuHoef}
d_i = 
\sup
_{\mathbb X}  |V_i(\varphi)|.    
\end{align}
\end{definition}

We follow~\cite[page 23]{KontoThesis} and we the use $\eta$-mixing coefficients introduced there.

\begin{definition}[$\eta$-mixing coefficients  and their associated matrix $\Delta$]
For a sequence $X_1, X_2, \ldots $ of random variables taking values
in a
countable alphabet $\Omega$ and for any $i,j\in \mathbb N$, we write
$X^{\geq j}$ and $X^{\leq i}$ for $(X_j, X_{j+1}, \ldots )$
and $(X_1, \ldots, X_i)$ respectively. 
 The mixing coefficients $\eta_{i,j}$ are real  numbers such that, for $j\ge i$,
\begin{align}\nonumber
    \eta_{ij} =\sup_{
    \begin{subarray}{c}
    x,x'\in \Omega^i,  
    \\ x[i]\neq x'[i]
    \end{subarray}
    }
    \sup_{A\in \sigma(X^{\geq j} ) } 
    \abs{ 
    \prob\pare{ X^{\geq j }\in A | X^{\leq i} = x} -
    \prob\pare{ X^{\geq j }\in A | X^{\leq i} = x'} 
    } 
\end{align}
that is, the  supremum is taken over  $x$ and $x'$ which are elements of $\Omega^i$ which differ only in the $i$\ts{th} coordinate and 
$\sigma(X^{\geq j} )$
is the sigma-algebra of Borel sets not depending
the first $j-1$ coordinates.
If $\prob\corch{{X^{\le i}=x}}=0$ or  $\prob\corch{X^{\le i}=x'}=0$, we define $\eta_{ij}=0$. 
The corresponding  matrix $\Delta$  is defined as 
\begin{align*}\label{Deltaij}
    \Delta_{ij} = \begin{cases}
        \eta _{ij}, & \textit{ if } j > i  \\
        1, & \textit{ if } j = i  \\        
        0, & \textit{ if } j < i.   
    \end{cases}
\end{align*}
 \end{definition}

  We consider  the Banach space $(\ell^2,\|\phantom{v}\|)$ which is formed by all the real sequences \[ {v}=\left(v_j\right)_{j\in \mathbb N}\quad \text{so that}\quad \| {v}\|^2=\sum_{j\ge 1}
v_j^2<\infty.\] 
The matrix $\Delta$   induces a bounded linear operator $\Delta:\ell^2\to \ell^2$
 whose norm is defined as 
\[\|\Delta\| =\sup_{\| {v}\|=1} \|\Delta  v\|.
\]

The functions we are going to consider in the concentration inequalities
below satisfy a bounded differences property 
 known as Lipschitz condition for a weighted Hamming distance.

 \begin{definition}[$c$-Lipschitz] Given $c = (c_i)_{i\geq 1}$, with $c_i\in \mathbb R$, we say that the function $\varphi : \Omega^ \N \to \R$ 
is $c$-Lipschitz 
if for any
 $x, x'\in \Omega^ \N$  which differ only in coordinate $i$, we have, 
\begin{align}
\nonumber
    \abs{\varphi (x) - \varphi (x')} \leq c_i.
\end{align}
\end{definition}

\begin{proposition}[Upper bound for the Azuma--Hoeffding coefficients]
\label{l:dotprodbound}
Let $X_1,X_2,\dots $ a sequence of random variables defined on $(\mathbb X, \mathcal{F},\mathbb P)$ taking values on alphabet $\Omega$. Let   $\Delta$ be the matrix of $\eta$-mixing coefficients with $\|\Delta\|<\infty$.  Let $c=(c_i)_{i\ge 1}\in \ell^2$ and $\varphi:\Omega^\N \to \mathbb R$ 
a bounded function that is also $c$-Lipschitz. 
Then, the Azuma--Hoeffding coefficients $d=\left(d_i\right)_{i\ge 1}$ satisfy, for each $i\geq 1 $, 
\begin{align}\label{e:convdtoc}
   0\le d_i \leq \sum_{j\geq i} c_j \eta_{ij}.
\end{align}
In particular, $\|d\|\le \|\Delta\|\|c\|$. 
\end{proposition}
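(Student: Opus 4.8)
The plan is to prove the pointwise bound \eqref{e:convdtoc} first and then deduce the operator-norm bound $\|d\|\le\|\Delta\|\|c\|$ as a formal consequence. I would work with the finite truncations $\varphi_N:\Omega^N\to\R$ and the filtration $\mathcal F_N=\sigma(X_1,\dots,X_N)$, establish the estimate with constants uniform in $N$, and then pass to the limit $N\to\infty$; but the cleanest exposition fixes $i$ and directly bounds $|V_i(\varphi)|$.

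The key step is a coupling/representation of the martingale difference $V_i(\varphi)=\E[\varphi(X)\mid\mathcal F_i]-\E[\varphi(X)\mid\mathcal F_{i-1}]$. Writing $g_i(x_1,\dots,x_i)=\E[\varphi(X)\mid X_1=x_1,\dots,X_i=x_i]$, one has $V_i(\varphi)=g_i(X_1,\dots,X_i)-\E_{X_i'}[g_i(X_1,\dots,X_{i-1},X_i')]$, where $X_i'$ is distributed as $X_i$ conditionally on $X^{\le i-1}$, independent of $X_i$. Hence $|V_i(\varphi)|\le \sup_{x[i]\ne x'[i]}|g_i(x^{\le i})-g_i(x'^{\le i})|$ over pairs agreeing on the first $i-1$ coordinates. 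Now I expand $g_i$ by conditioning on the tail: $g_i(x^{\le i})=\E[\varphi(X)\mid X^{\le i}=x^{\le i}]=\sum \text{(or integral over)}$ the conditional law of $X^{\ge i+1}$ given $X^{\le i}=x^{\le i}$, applied to $\varphi(x_1,\dots,x_i,\cdot)$. For a fixed realization of the tail $y=(y_{i+1},y_{i+2},\dots)$, the $c$-Lipschitz property gives $|\varphi(x_1,\dots,x_{i-1},a,y)-\varphi(x_1,\dots,x_{i-1},b,y)|\le c_i$ (the two arguments differ only in coordinate $i$), and more generally changing the tail coordinate $y_j$ to $y_j'$ costs at most $c_j$. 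The difference $g_i(x^{\le i})-g_i(x'^{\le i})$ therefore splits into a part controlled by $c_i$ (the direct effect of flipping coordinate $i$ inside $\varphi$) plus a part measuring how much the conditional tail-law changes when the $i$th coordinate is flipped; the latter, coordinate by coordinate, is governed precisely by the $\eta$-mixing coefficient $\eta_{ij}$ against the per-coordinate Lipschitz weight $c_j$. Making this telescoping estimate rigorous — summing the contributions over $j>i$ via a standard swapping-of-coordinates argument analogous to \cite[Theorem 3.3.1]{KontoThesis} and \cite[Theorem 1.1]{KontorovichRamanan2008}, using that $\eta_{ij}$ is exactly the total-variation-type modulus of continuity of the tail conditional distribution in the $i$th input — yields $|V_i(\varphi)|\le c_i+\sum_{j>i}c_j\eta_{ij}=\sum_{j\ge i}c_j\eta_{ij}$ (the $j=i$ term being $c_i\cdot 1=c_i$, matching $\Delta_{ii}=1$). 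Taking the supremum over $\mathbb X$ gives $0\le d_i\le\sum_{j\ge i}c_j\eta_{ij}$; nonnegativity is immediate since $d_i$ is a supremum of absolute values.

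For the final claim, observe that the bound \eqref{e:convdtoc} reads $0\le d_i\le(\Delta c)_i$ coordinatewise for the matrix $\Delta$ with entries $\Delta_{ij}=\eta_{ij}$ for $j>i$, $\Delta_{ii}=1$, $\Delta_{ij}=0$ for $j<i$. Since all entries of $\Delta$ and all $c_j$ are nonnegative, $\|d\|^2=\sum_i d_i^2\le\sum_i(\Delta c)_i^2=\|\Delta c\|^2\le\|\Delta\|^2\|c\|^2$, whence $\|d\|\le\|\Delta\|\|c\|$. I would note that $c\in\ell^2$ together with $\|\Delta\|<\infty$ guarantees $\Delta c\in\ell^2$, so $d\in\ell^2$ and the chain of inequalities is legitimate; for the truncated versions one uses the finite submatrices of $\Delta$, whose operator norms are bounded by $\|\Delta\|$, so the passage to $N\to\infty$ is harmless.

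The main obstacle is the tail-perturbation step: carefully organizing the telescoping over the infinitely many tail coordinates $j>i$ so that each swap is charged against exactly $c_j\eta_{ij}$ and the total converges. This requires a measure-theoretic version (regular conditional probabilities, a coupling of the two tail laws achieving the $\eta_{ij}$ bounds simultaneously across coordinates) rather than the clean finite-alphabet combinatorial argument, and one must verify that the relevant conditional laws are well-defined off a null set and that the bound passes to the $\sup$ over $\mathbb X$ in the definition \eqref{d:AzuHoef}. Everything else — the $c$-Lipschitz input, the $\ell^2$ operator-norm deduction — is routine.
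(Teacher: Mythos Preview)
Your plan is correct and follows the same telescoping strategy as the paper: write $V_i(\varphi)$ as a sum that swaps tail coordinates $j\ge i$ one at a time, bound each swap by $c_j$ (from the $c$-Lipschitz condition) times $\eta_{ij}$ (from the definition of the mixing coefficients), and then read off $\|d\|\le\|\Delta\|\|c\|$ from the coordinatewise inequality $0\le d_i\le(\Delta c)_i$. Your stated ``main obstacle'' is less severe than you anticipate: no simultaneous coupling is needed, since the paper simply truncates the telescope at level $N$, leaving a single boundary term bounded by $\sup_{\Omega^\N}|\varphi|\cdot\eta_{i,N+1}$, which vanishes as $N\to\infty$ because $\|\Delta\|<\infty$ forces $\eta_{i,N+1}\to 0$ --- this is precisely where the boundedness hypothesis on $\varphi$ is used.
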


\begin{proof}
For each $i$, consider the filtration        
    ${\cal F}_i = \sigma(X_1,\ldots, X_i) 
    $
    of sigma-algebras generated by the first $i$ random variables.
     We want to  bound the Azuma--Hoeffding coefficients  
     as in~\eqref{d:AzuHoef}. 
   We  prove~\eqref{e:convdtoc} for $i=1$, the others are similar. 
For $x\in \Omega^N$ we denote 
$\E\corch{\varphi|X=x}$ by $\E\corch{\varphi|x}$ and similarly 
for probabilities. 
Let us fix $x\in \Omega^{N} $ so that $\mathbb P[x^{\le 1}]>0$.
Then,

\setlength{\mathindent}{0pt}

\noindent
\begin{align*}
V_{1}(\varphi)= & 
 \E\corch{\varphi | x^{\leq 1} } - 
 \E\corch{\varphi  }    \\  
 = &
 \int_{y\in \Omega^{\N} } 
  \E\corch{\varphi | x^{\leq 1} y^{> 1} } 
  {\rm d} \prob\pare{ x^{\leq 1} y^{> 1} | x^{\leq 1} }
  - 
 \E\corch{\varphi | y }  {\rm d}\prob\pare{  y  }
\\
= &
 \int_{y\in \Omega^{\N} } 
  \pare{ 
  \E\corch{\varphi | x^{\leq 1} y^{> 1} } -
  \E\corch{\varphi | y }
  } {\rm d}\prob\pare{  y  }
  +
 \int_{y\in \Omega^{\N} } 
  \E\corch{\varphi | x^{\leq 1} y^{> 1} } 
  \pare{
  {\rm d} \prob\pare{ x^{\leq 1} y^{> 1} | x^{\leq 1} }
  -  
 { \rm d} \prob\pare{ y }
  }.
\end{align*}

\setlength{\mathindent}{0pt}


Telescoping the last integrand for $N\in \N$ and $N\ge 2$, we get
\begin{align*}
V_{1}(\varphi) = &
 \int_{y\in \Omega^{\N} } 
  \pare{ 
  \E\corch{\varphi | x^{\leq 1} y^{> 1} } -
  \E\corch{\varphi | y^{\leq 1} y^{> 1} }
  } {\rm d}\prob\pare{  y  } \\ 
  &  +
 \int_{y\in \Omega^{\N} }
 \pare{
 \sum_{k = 2}^{N}
  \E\corch{\varphi | x^{< k} y^{\geq k} } 
  -
  \E\corch{\varphi | x^{\leq k} y^{> k} } 
  }
  \pare{
  {\rm d} \prob\pare{ x^{\leq 1} y^{> 1} | x^{\leq 1} }
  -  
  {\rm d} \prob\pare{ y }
  } \\ 
  &  +
 \int_{y\in \Omega^{\N} }
  \E\corch{\varphi | x^{\leq N} y^{> N} } 
  \pare{
  {\rm d} \prob\pare{ x^{\leq 1} y^{> 1} | x^{\leq 1} }
  -  
  {\rm d} \prob\pare{ y }
  }.
\end{align*}
\setlength{\mathindent}{0pt}

\noindent
Interchanging sum with integral in the second term leads to
\begin{align*}
V_{1}(\varphi) = &
 \int_{y\in \Omega^{\N} } 
  \pare{ 
  \E\corch{\varphi | x^{\leq 1} y^{> 1} } -
  \E\corch{\varphi | y^{\leq 1} y^{> 1} }
  } {\rm d}\prob\pare{  y  } \\ 
  &  +
 \sum_{k = 2}^{N}
 \int_{y\in \Omega^{\N} }
 \pare{
  \E\corch{\varphi | x^{< k} y^{\geq k} } 
  -
  \E\corch{\varphi | x^{\leq k} y^{> k} } 
  }
  \pare{
  {\rm d} \prob\pare{ x^{\leq 1} y^{> 1} | x^{\leq 1} }
  -  
  {\rm d} \prob\pare{ y }
  } \\ 
  &  +
 \int_{y\in \Omega^{\N} }
  \E\corch{\varphi | x^{\leq N} y^{> N} } 
  \pare{
  {\rm d} \prob\pare{ x^{\leq 1} y^{> 1} | x^{\leq 1} }
  -  
  {\rm d} \prob\pare{ y }
  }.
\end{align*}
Since both
$\E\corch{\varphi | x^{< k} y^{\geq k} }$ 
and 
$\E\corch{\varphi | x^{\leq k} y^{> k} }$
are measurable for the $\sigma-$algebra 
$\sigma(x^{\leq 1}X^{\geq k}| x^{\leq 1}) $
and
$\sigma(x^{\leq 1}X^{>1}| x^{\leq 1})$ is finer,
by the stability of conditional expectations 
we can replace ${\rm d}\prob\pare{ x^{\leq 1} y^{> 1} | x^{\leq 1} }$ by ${\rm d}\prob\pare{ x^{\leq 1} y^{\geq k} | x^{\leq 1} } $ and     $
  {\rm d} \prob\pare{ y }
  $ 
  by
  ${\rm d} \prob\pare{ y^{\geq k}} 
  $ for each $ 2\le k\le N$ in the middle summation, and we can replace ${\rm d} \prob\pare{ x^{\leq 1} y^{> 1} | x^{\leq 1} }$ by ${\rm d} \prob\pare{ x^{\leq 1} y^{> N} | x^{\leq 1} }$ in the last integral. 

\begin{align*}
V_{1}(\varphi)  & = 
 \int_{y\in \Omega^{\N} } 
  \pare{ 
  \E\corch{\varphi | x^{\leq 1} y^{> 1} } -
  \E\corch{\varphi | y^{\leq 1} y^{> 1} }
  } {\rm d}\prob\pare{  y  } \\ 
  &  +
 {\sum_{k=2}^N\int_{ y \in \Omega^{\N} } 
   \pare{    \E\corch{  
     \varphi | { x^{ < k } y^{\geq k} }} - 
     \E\corch{\varphi| { x^{ \leq k } y^{ > k}} 
     }}
\pare{  {\rm d} \prob\pare{ x^{\leq 1} y^{\geq k} | x^{\leq 1} } 
      - {\rm d} \prob\pare{ y^{\geq k} } }}
   \\
  &  +
 \int_{y\in \Omega^{\N} }
  \E\corch{\varphi | x^{\leq N} y^{> N} } 
  \pare{
  {\rm d} \prob\pare{ x^{\leq 1} y^{> N} | x^{\leq 1} }
  -  
  {\rm d} \prob\pare{ y }  }
\end{align*}
from which
\begin{align*}
V_{1}(\varphi)
  &\leq \sum_{j=1}^{ N} c_j \eta_{1j} + \sup_{\Omega^\N}{| \varphi|}  \cdot \eta_{1,N+1}.
\end{align*}

Since $||\Delta||<\infty$, $\eta_{1,N+1}$ goes to zero when $N\to \infty$, and 
$d_1\le\sum_{j\geq 1} c_j \eta_{1j} .
$
The proof is then complete.
\end{proof}

\begin{proposition}[Instantiation of \protect{{\cite[Lemma 4.1]{Ledoux}}}] \label{p:hoeffazuma}
Let $X =X_1, {X_2},  \dots, X_N$  be a sequence of random variables defined on $(\mathbb X, {\cal F}, \prob{ })$
taking values on some $\Omega.$ 
For $1\le i \le N$, consider the Azuma--Hoeffding coefficients $d_i$
as in~\eqref{d:AzuHoef}.
 Then,   
\begin{align*}\displaystyle
    \prob\pare{ \abs{\varphi(X) - \E [\varphi(X)] } \geq t } 
    \leq 
    2\exp\left( \frac{-t^2}{ 2 \sum_{i=1}^N d_i^2} \right).
\end{align*}
\end{proposition}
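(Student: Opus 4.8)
The plan is to recognize this as the classical Azuma--Hoeffding martingale concentration inequality and to reproduce its short proof. First I would set $Z_i = \E\corch{\varphi(X) \mid {\cal F}_i}$ for $0 \le i \le N$. Since ${\cal F}_0 = \set{\emptyset, \mathbb X}$ we have $Z_0 = \E\corch{\varphi(X)}$, and since $\varphi(X)$ is ${\cal F}_N$-measurable we have $Z_N = \varphi(X)$; hence $(Z_i)_{0 \le i \le N}$ is a martingale for the filtration $({\cal F}_i)$ and $\varphi(X) - \E\corch{\varphi(X)} = \sum_{i=1}^N V_i$, where $V_i = Z_i - Z_{i-1} = V_i(\varphi)$ are the martingale increments. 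By the definition of $d_i$ in \eqref{d:AzuHoef} we have $\abs{V_i} \le d_i$ everywhere on $\mathbb X$, and $\E\corch{V_i \mid {\cal F}_{i-1}} = 0$.

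The next step is the conditional exponential moment bound
\[
\E\corch{e^{\lambda V_i} \mid {\cal F}_{i-1}} \le e^{\lambda^2 d_i^2/2} \qquad \text{for every } \lambda \in \R .
\]
This is Hoeffding's lemma: since $V_i$ takes values in $[-d_i, d_i]$, convexity of $t \mapsto e^{\lambda t}$ gives the pointwise estimate $e^{\lambda V_i} \le \frac{d_i - V_i}{2 d_i} e^{-\lambda d_i} + \frac{d_i + V_i}{2 d_i} e^{\lambda d_i}$; taking conditional expectation and using $\E\corch{V_i \mid {\cal F}_{i-1}} = 0$ yields $\E\corch{e^{\lambda V_i} \mid {\cal F}_{i-1}} \le \cosh(\lambda d_i) \le e^{\lambda^2 d_i^2/2}$, the last inequality by comparing Taylor coefficients. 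I would then iterate this along the filtration with the tower property: conditioning first on ${\cal F}_{N-1}$,
\[
\E\corch{e^{\lambda \sum_{i=1}^N V_i}} = \E\corch{e^{\lambda \sum_{i=1}^{N-1} V_i}\, \E\corch{e^{\lambda V_N} \mid {\cal F}_{N-1}}} \le e^{\lambda^2 d_N^2/2}\, \E\corch{e^{\lambda \sum_{i=1}^{N-1} V_i}},
\]
and repeating downwards gives $\E\corch{e^{\lambda(\varphi(X) - \E\corch{\varphi(X)})}} \le \exp\pare{\frac{\lambda^2}{2}\sum_{i=1}^N d_i^2}$ for all $\lambda \in \R$.

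A Chernoff bound then finishes one tail: for $\lambda > 0$,
\[
\prob\pare{\varphi(X) - \E\corch{\varphi(X)} \ge t} \le e^{-\lambda t}\, \E\corch{e^{\lambda(\varphi(X) - \E\corch{\varphi(X)})}} \le \exp\pare{-\lambda t + \tfrac{\lambda^2}{2}\sum_{i=1}^N d_i^2},
\]
and choosing $\lambda = t / \sum_{i=1}^N d_i^2$ gives the bound $\exp\pare{-t^2/(2\sum_{i=1}^N d_i^2)}$. Applying the same argument to $-\varphi$, whose Azuma--Hoeffding coefficients coincide with those of $\varphi$, bounds the lower tail identically; adding the two estimates produces the factor $2$ and completes the proof.

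There is essentially no serious obstacle: the statement is a direct instantiation of \cite[Lemma 4.1]{Ledoux}. The only points needing a word of care are that $\varphi(X)$ be integrable so the conditional expectations $Z_i$ are well defined (automatic in our applications, where $\varphi$ is bounded, and in any case one may assume $\sum_i d_i^2 < \infty$), and that the supremum defining $d_i$ delivers a genuine pointwise --- hence almost sure --- bound on $\abs{V_i}$, which is exactly what makes Hoeffding's lemma applicable at each step of the iteration.
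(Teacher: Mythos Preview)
Your proof is correct and is exactly the standard Azuma--Hoeffding argument via Hoeffding's lemma, the tower property, and a Chernoff bound. The paper itself does not give a proof of this proposition: it is stated as an instantiation of \cite[Lemma~4.1]{Ledoux} and is invoked as a black box, so your write-up supplies precisely what that reference contains.
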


\begin{proposition}
[Concentration inequality with infinitely many variables]\label{lemma:KontRam}
Let $X_1, X_2,\dots $ be 
random variables taking values in some countable set $\Omega$ and  
 let   $\Delta$ be  the matrix of mixing coefficients with $\|\Delta\|<\infty$. Let  $\varphi:\Omega^\N \to \R$ be  a $c$-Lipschitz function for some $c=(c_i)_{i\ge 1}\in \ell^2$ such that $ \varphi(X) \in L^1$. 
Then, for any $t > 0$,
\begin{align*}
    \prob\pare{ \abs{\varphi(X) - \E [\varphi(X)] } \geq t } 
    \leq 
    2\exp\pare{ \frac{-t^2}{2{\|\Delta\| }^2 \|c\|^2 }}.
\end{align*}
\end{proposition}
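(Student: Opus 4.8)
The plan is to deduce this from the finite-variable Azuma--Hoeffding inequality (Proposition~\ref{p:hoeffazuma}) together with the operator-norm bound on the martingale-difference coefficients (Proposition~\ref{l:dotprodbound}), via a double limiting procedure: first a truncation of $\varphi$ to reduce to a bounded function (so that Proposition~\ref{l:dotprodbound} applies), then a truncation in the number of variables (so that Proposition~\ref{p:hoeffazuma} applies). For the first reduction, given $M>0$ set $\varphi^{(M)}=(-M)\vee(\varphi\wedge M)$; the map $t\mapsto(-M)\vee(t\wedge M)$ is $1$-Lipschitz, so $\varphi^{(M)}$ is bounded and still $c$-Lipschitz with the same $c$, and hence would receive the same bound $2\exp(-t^2/(2\|\Delta\|^2\|c\|^2))$, independent of $M$. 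Since $\varphi^{(M)}(X)\to\varphi(X)$ pointwise with $|\varphi^{(M)}(X)|\le|\varphi(X)|\in L^1$, dominated convergence gives $\E[\varphi^{(M)}(X)]\to\E[\varphi(X)]$, so $\varphi^{(M)}(X)-\E[\varphi^{(M)}(X)]\to\varphi(X)-\E[\varphi(X)]$ almost surely, hence in distribution; applying the portmanteau theorem to the open sets $(s,\infty)$ and letting $s\uparrow t$ transfers the bound to $\varphi$. Thus it suffices to treat bounded $c$-Lipschitz $\varphi$.

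For the bounded case, put $\mathcal F_N=\sigma(X_1,\dots,X_N)$ and $\varphi_N=\E[\varphi(X)\mid\mathcal F_N]$. By the martingale convergence theorem $\varphi_N\to\varphi(X)$ almost surely (and in $L^1$), and $\E[\varphi_N]=\E[\varphi(X)]$ for all $N$; since $\varphi_N$ is $\mathcal F_N$-measurable it is a function of $X_1,\dots,X_N$, so Proposition~\ref{p:hoeffazuma} is available for $\varphi_N$. The key identification is that for $1\le i\le N$ the tower property gives $\E[\varphi_N\mid\mathcal F_i]=\E[\varphi(X)\mid\mathcal F_i]$, whence $V_i(\varphi_N)=V_i(\varphi)$ and the $i$-th Azuma--Hoeffding coefficient of $\varphi_N$ equals $d_i=\sup_{\mathbb X}|V_i(\varphi)|$. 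Proposition~\ref{l:dotprodbound} (applicable since $\varphi$ is now bounded and $c$-Lipschitz) yields $\sum_{i=1}^N d_i^2\le\|d\|^2\le\|\Delta\|^2\|c\|^2$ for every $N$, so Proposition~\ref{p:hoeffazuma} gives, for all $N$ and all $t>0$,
\[
\prob\pare{\abs{\varphi_N-\E[\varphi(X)]}\ge t}\le 2\exp\pare{\frac{-t^2}{2\sum_{i=1}^N d_i^2}}\le 2\exp\pare{\frac{-t^2}{2\|\Delta\|^2\|c\|^2}}.
\]
Letting $N\to\infty$: since $\varphi_N-\E[\varphi(X)]\to\varphi(X)-\E[\varphi(X)]$ almost surely, for $0<s<t$ one has $\prob(|\varphi(X)-\E[\varphi(X)]|>s)\le\liminf_N\prob(|\varphi_N-\E[\varphi(X)]|>s)\le 2\exp(-s^2/(2\|\Delta\|^2\|c\|^2))$ by Fatou (or portmanteau), and then $s\uparrow t$ together with continuity of the measure along the decreasing events $\{|\varphi(X)-\E[\varphi(X)]|>s\}\downarrow\{|\varphi(X)-\E[\varphi(X)]|\ge t\}$ yields the claim.

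I do not expect a genuine obstacle here: the substantive content — translating the weighted Hamming weights $c$ into a bound on the martingale-difference coefficients through the mixing matrix $\Delta$ — is exactly Proposition~\ref{l:dotprodbound}, and the Gaussian tail is Proposition~\ref{p:hoeffazuma}. The only points requiring care are the bookkeeping identity $V_i(\varphi_N)=V_i(\varphi)$ for $i\le N$ and the control of the two limiting procedures (truncation level $M$, then number of variables $N$), both of which are handled by the dominated-convergence and portmanteau arguments above; one must also note that the bound in Proposition~\ref{l:dotprodbound} depends only on $c$ and $\Delta$, which is what makes the truncated estimates uniform and lets the limits go through.
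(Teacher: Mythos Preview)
Your proof is correct and follows essentially the same approach as the paper: reduce to bounded $\varphi$ by composing with a $1$-Lipschitz truncation (preserving the $c$-Lipschitz property), then approximate by the martingale $\varphi_N=\E[\varphi\mid\mathcal F_N]$, use the tower identity $V_i(\varphi_N)=V_i(\varphi)$ for $i\le N$ to feed Proposition~\ref{l:dotprodbound} into Proposition~\ref{p:hoeffazuma}, and pass to the limit. Your treatment of the two limits (via portmanteau/Fatou and the $s\uparrow t$ argument) is in fact more explicit than the paper's, which simply writes ``taking the limit as $N\to\infty$'' and ``by dominated convergence''.
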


\begin{proof} 
We assume first that $\varphi$ is bounded.
For each $N\in\N$ consider $\varphi:\Omega^\N\to\R$,
    \[
\varphi_N(x)=\E\corch{\varphi (X) \,| \,X^{\leq N} = x^{\leq N}}.
    \]
In other words, $\varphi_N$  is the martingale 
$\E\corch{\varphi|{\cal F}_N }.$
Since $\varphi$ is bounded, 
by Lévy's zero-one law 
the martingale $\E\corch{\varphi|{\cal F}_N}$ converges to 
$\varphi(X)$ 
both a.e. and in $L^1.$    
By standard properties of conditional expectations, it is easy to see that 
\[
V_i(\varphi_N) = 
 \begin{cases}
V_i(\varphi)  & \text{ if } i\leq  N \\ 
0 & \text{ if } i > N. 
 \end{cases}
\]
Hence, the Azuma--Hoeffding coefficients $d_i$ for $\varphi$ agree with those for
$\varphi_N$ as long as $i\leq N$ and vanish afterwards.

We  invoke Proposition~\ref{p:hoeffazuma} with $\varphi_N$ 
and get
\begin{align*}\displaystyle
    \prob\pare{ \abs{\varphi_N(X) - \E [\varphi_N(X)] } \geq t } 
    \leq 
    2\exp\left( \frac{-t^2}{ 2 \sum_{i=1}^N d_i^2} \right).
\end{align*}
By Proposition~\ref{l:dotprodbound} (keep in mind we are assuming $\varphi$ bounded now) this becomes
\begin{align*}\displaystyle
    \prob\pare{ \abs{\varphi_N(X) - \E [\varphi_N(X)] } \geq t } 
    \leq 
    2\exp\pare{ \frac{-t^2}{2\norm{{\Delta }}^2 \norm{c}^2 }}.
\end{align*}
The proof  for the case $\varphi$ being bounded finishes by taking the limit as $N\to \infty.$

Now  remove the boundedness hypothesis on $\varphi$.
By dominated convergence, the sequence of bounded functions
\begin{align*}
    \varphi_N = 
 \begin{cases}
N, & \text{ if } \varphi(x) > N \\ 
\varphi(x), & \text{ if } \abs{\varphi(x)} \leq N \\ 
-N, & \text{ if } \varphi(x) < -N 
 \end{cases}
\end{align*}
converges to  
$ \varphi $
both a.e. and in $L^1.$
It remains to check that these $\varphi_N$'s are still $c-$Lipschitz.
We can think of the truncations as $\varphi_N = \Psi_N \circ \varphi$,
with
\begin{align*}
 \Psi_N (x) = \frac{\abs{x+N} - \abs{x-N}}{2} = 
 \begin{cases}
N, & \text{ if } x > N \\ 
x, & \text{ if } \abs{x} \leq N \\ 
-N, & \text{ if } x < -N 
 \end{cases}
\end{align*}
having Lipschitz constant $1$.
Therefore, for every pair $x, x'\in \Omega ^\N$ differing only in the $i$th coordinate we have
\[
\abs{\varphi_N(x) - \varphi_N(x') }  
= \abs{\Psi_N(\varphi(x)) - \Psi_N(\varphi(x')) } 
\leq 
\abs{\varphi(x) - \varphi(x') } \leq c_i.
\]
This completes the proof.
\end{proof}

The next results consider our particular case of 
$\mathbb{X} = \Omega^\N $, the invariant exponentially $\psi$-mixing  measure $\mu$, the usual projections $X_i:\Omega^\mathbb N\to \Omega$ 
onto the $i$-th coordinate of $x\in \Omega^\N$
and ${\mathcal F}_i = \mathcal{B}_{1,i}$ for all $i\in \N.$

\begin{lemma}[Upper bound for $\|\Delta\|$]
\label{lemma:normadelta} 
Let $\mu$ be an invariant exponentially $\psi$-mixing measure on $\Omega^\mathbb N$ with mixing constants $T>0$ and $\sigma\in (0,1)$.  
Let $\Delta$ be  the    matrix of $\eta$-mixing coefficients. Then,
\[\|\Delta\|\leq 1+ 2{T\sigma}/{(1-\sigma)}.\]
\end{lemma}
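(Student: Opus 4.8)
The plan is to bound the operator norm $\|\Delta\|$ by controlling the entries $\eta_{ij}$ of the matrix and then using a standard norm estimate for operators on $\ell^2$ with banded/summable off-diagonal decay. First I would show that the $\eta$-mixing coefficients inherit exponential decay from the $\psi$-mixing hypothesis \eqref{eq:mixing}: for $j > i$ and any $A \in \sigma(X^{\geq j})$, writing $\mu(A \mid X^{\leq i} = x)$ in terms of cylinder measures and applying \eqref{eq:our-mixing} (or more precisely \eqref{eq:mixing} with the sets $\{x : x[1,i] = \text{prefix}\}$ and $A$), one gets $|\mu(A \mid X^{\leq i} = x) - \mu(A)| \leq T\sigma^{j-i}\mu(A) \leq T\sigma^{j-i}$. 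Taking the difference over two values $x, x'$ differing in the last coordinate gives $\eta_{ij} \leq 2T\sigma^{j-i}$. Thus $\Delta_{ij} = 0$ for $j < i$, $\Delta_{ii} = 1$, and $|\Delta_{ij}| \leq 2T\sigma^{j-i}$ for $j > i$.

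Next I would estimate $\|\Delta\|$ using the classical bound that for a matrix $\Delta$ acting on $\ell^2$,
\[
\|\Delta\| \leq \sup_i \sum_j |\Delta_{ij}|^{1/2} |\Delta_{ij}|^{1/2} \cdots
\]
— more cleanly, the Schur test: $\|\Delta\| \leq \sqrt{\left(\sup_i \sum_j |\Delta_{ij}|\right)\left(\sup_j \sum_i |\Delta_{ij}|\right)}$, or even more simply the bound $\|\Delta\| \leq \sup_i \sum_j |\Delta_{ij}|$ when the row sums and column sums are comparable. Here the row sum is $\sum_{j \geq i}|\Delta_{ij}| \leq 1 + \sum_{j > i} 2T\sigma^{j-i} = 1 + 2T\sigma/(1-\sigma)$, and the column sum $\sum_{i \leq j}|\Delta_{ij}| \leq 1 + \sum_{i<j} 2T\sigma^{j-i} \leq 1 + 2T\sigma/(1-\sigma)$ as well. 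Applying the Schur test (or the triangle inequality $\Delta = \mathrm{Id} + \Delta'$ with $\Delta'$ the strictly-upper-triangular part, together with $\|\Delta'\| \leq \sqrt{\|\Delta'\|_{1\to 1}\|\Delta'\|_{\infty\to\infty}} \leq 2T\sigma/(1-\sigma)$) yields $\|\Delta\| \leq 1 + 2T\sigma/(1-\sigma)$.

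The main obstacle is the first step: justifying $\eta_{ij} \leq 2T\sigma^{j-i}$ rigorously. The subtlety is that $\eta_{ij}$ is a supremum over \emph{arbitrary} $A \in \sigma(X^{\geq j})$, not just cylinders, so one must argue that the bound $|\mu(A \cap C)/(\mu(C)\mu(A)) - 1| \leq T\sigma^{j-i}$ extends from generating cylinders to the whole sigma-algebra; this follows by a standard monotone-class / approximation argument since the inequality is preserved under the relevant set operations (or one notes that $A \mapsto \mu(A \mid X^{\le i} = x)$ and $A \mapsto \mu(A)$ are both measures, and their Radon–Nikodym derivative is pinned between $1 - T\sigma^{j-i}$ and $1 + T\sigma^{j-i}$ on a generating $\pi$-system, hence everywhere). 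A second minor point is handling the degenerate case $\mu(X^{\leq i} = x) = 0$, which is covered by the convention $\eta_{ij} = 0$. Once $\eta_{ij} \leq 2T\sigma^{j-i}$ is in hand, the operator-norm estimate is routine.
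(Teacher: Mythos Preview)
Your proposal is correct and essentially follows the paper's proof: both first bound $\eta_{ij} \leq 2T\sigma^{j-i}$ from the $\psi$-mixing hypothesis (which, incidentally, is already stated in \eqref{eq:mixing} for arbitrary $B\in\mathcal B_{\ell,\infty}$, so no monotone-class argument is needed), and then split $\Delta = I + \Delta'$ and bound the operator norm of the strictly upper-triangular part $\Delta'$. The only cosmetic difference is that the paper dominates $\Delta'$ entrywise by $2T\sum_{k\geq 1}(\sigma J)^k$, where $J$ is the right-shift with $\|J\|\leq 1$, and sums the operator geometric series, whereas you invoke the Schur test on row and column sums; both arguments are one-liners yielding $\|\Delta'\|\leq 2T\sigma/(1-\sigma)$.
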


\begin{proof}
Recall that we write $X^{\geq j}$ and $X^{\leq i}$ for $(X_j, X_{j+1}, \ldots )$
and $(X_1, \ldots, X_i)$ respectively.
Thus, for $w\in \Omega^i$, the probability that $X^{\leq i}= w$ is 
$\mu_i(w)$.
For $i,j\in \mathbb N$ such that $j\ge i$, take  $x, x'\in \Omega^ \N$  which differ only in coordinate $i$,
and 
 $A\in \mathcal{B}_{j,\infty}$ where   $j> i$.
Let us recall that  
\[
\eta_{i,j}=  \sup_{
    \begin{subarray}{c}
    x,x'\in \Omega^i,  
    \\ x[i]\neq x'[i]
    \end{subarray}
    }
    \sup_{A\in \mathcal{B}_{j,\infty}} 
 \left|\frac{\mu( C_i(x)\cap A)}{\mu_i(x)}-\frac{\mu( C_i(x')\cap A)}{\mu_i(x')}\right|.
\] 
By  our mixing property \eqref{eq:our-mixing}, 
\[
\left|\frac{\mu( C_i(x)\cap A)}{\mu_i(x)}-\mu(A)\right|+\left|\mu(A)-\frac{\mu( C_i(x')\cap A)}{\mu_i(x')}\right| \le 2\mu(A)T\sigma^{j-i}.\]
Then, each $\eta$-mixing coefficient  $\eta_{ij}\le 2T\sigma^{j-i}$.  

Let $J$ be the matrix
\begin{align*}
    J_{ij} = \begin{cases}
        1 & \textit{ if } j = i +1 \\
        0 & \textit{ otherwise},
    \end{cases}
\end{align*}
and  observe that $ \norm J \leq 1.$
By the triangular inequality and geometric series we get 
\begin{align*}
    \norm \Delta \leq 1 + T \sum_{i \geq 1 } \norm { (\sigma J)^i } \leq 1+2\frac{T\sigma}{1-\sigma}.
    \end{align*}
\end{proof}


\begin{lemma}[Two concentrations]\label{prop:concentration}
Let $\mu$ be an invariant and exponentially $\psi$-mixing measure on $\Omega^{\mathbb N}$. 
Let $k\in \mathbb N$ be  large enough and $j\ge 0$. Also, let $S\subset \mathbb R^{+}$ be a finite union of bounded intervals and denote by $\sup S$ the supremum of $S$. 
\begin{enumerate}

\item   
For  
 $\varphi_{k,S}:\Omega^\mathbb N \to \mathbb R$, 
$
\varphi_{k,S}(x)=\E_{\mu_k}[M_k^x(S)]
$ 
we have, for any $t>0$,
\begin{align*}
    \mu\pare{\{ x\in \Omega^\N: \abs{\varphi_{k,S}(x) - \E_\mu [\varphi_{k,S}] } \geq t \}} 
    \leq 
    2\exp\pare{
\frac{-t^2}
    {\|\Delta\|^28k^4 (\sup S) K^2 \rho^k}}.
\end{align*}

\item For  
 $\varphi_{k,j,S} : \Omega^{\mathbb N} \to [0,1]$,
$
\varphi_{k,j,S} (x)=\mu_k(\{w\in\Omega^k: M_k^x(w)(S)=j\})
$
we have, for any $t > 0$,
\begin{align*}
    \mu\pare{\{ x\in \Omega^\N: \abs{\varphi_{k,j,S}(x) - \E_\mu [\varphi_{k,j,S}] } \geq t \}} 
    \leq 
    2\exp\pare{{\frac{-t^2}{\|\Delta\|^28 k^2 (\sup {S})     K^2 \rho^k} }}.
\end{align*}

\end{enumerate}
The numbers   $\rho\in (0,1)$, and $K>0$
 are the contraction ratio and the constant 
given in \eqref{eq:rhoK}; and
$\Delta$ is the matrix of $\eta$-mixing coefficients associated with the mixing measure $\mu$.    
\end{lemma}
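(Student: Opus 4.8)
The plan is to deduce both inequalities from the concentration bound of Proposition~\ref{lemma:KontRam} applied, respectively, to $\varphi_{k,S}$ and to $\varphi_{k,j,S}$, using the bound $\|\Delta\|\le 1+2T\sigma/(1-\sigma)$ from Lemma~\ref{lemma:normadelta}. The work therefore reduces to producing, for each function, an explicit sequence $c=(c_i)_{i\ge1}\in\ell^2$ witnessing the $c$-Lipschitz property, and to estimating $\|c\|^2$. The integrability hypothesis is immediate: $\varphi_{k,j,S}$ takes values in $[0,1]$, while $\E_\mu[\varphi_{k,S}]=\E_{\mu\times\mu_k}[M_k(S)]=|S|+O(\rho^k)<\infty$ by Lemma~\ref{Le:expprod}, so $\varphi_{k,S}(X)\in L^1$. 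We may also assume $K\ge 2$ (enlarging the constant in \eqref{eq:rhoK} if necessary) and take $k$ large enough that $(\sup S)/(K\rho^k)\ge k$.

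For the Lipschitz estimates, fix $x,x'\in\Omega^\N$ differing only in coordinate $i$ and put $W_i=\{p\in\N:\max(1,i-k+1)\le p\le i\}$, a block of at most $k$ consecutive positions. For every $p\notin W_i$ one has $x[p,p+k)=x'[p,p+k)$, hence $I_p(x,w)=I_p(x',w)$ for all $w$, so only occurrences starting in $W_i$ are affected; moreover, if $I_p(x,w)\neq I_p(x',w)$ for some $p\in W_i$ then necessarily $w\in\{x[p,p+k),x'[p,p+k)\}$. Writing $\varphi_{k,S}(x)=\sum_{w\in\Omega^k}\mu_k(w)\,M_k^x(w)(S)$ and noting that $M_k^x(w)(S)$ is a sum of indicators $I_p(x,w)$ with $p\in\J_{w,S}$, of which at most $\#W_i\le k$ can change (each by at most $1$), we get
\[
\abs{\varphi_{k,S}(x)-\varphi_{k,S}(x')}\le k\sum_{p\in W_i}\ \sum_{\substack{w\in\{x[p,p+k),x'[p,p+k)\}\\ p\,\mu_k(w)\in S}}\mu_k(w)\le 2k\sum_{p\in W_i}\min\!\pare{K\rho^k,\tfrac{\sup S}{p}},
\]
where the last step uses \eqref{eq:rhoK} together with the fact that $p\,\mu_k(w)\in S\subseteq(0,\sup S]$ forces $\mu_k(w)\le(\sup S)/p$. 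Thus $\varphi_{k,S}$ is $c$-Lipschitz with $c_i\le 2k\sum_{p\in W_i}\min(K\rho^k,(\sup S)/p)$. For $\varphi_{k,j,S}(x)=\mu_k(\{w:M_k^x(w)(S)=j\})$ the same analysis applies, except that each affected word toggles in or out of the set, contributing at most $\mu_k(w)$ rather than $k\,\mu_k(w)$; this gives a constant $\tilde c_i\le 2\sum_{p\in W_i}\min(K\rho^k,(\sup S)/p)$.

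It remains to bound $\|c\|^2$ and $\|\tilde c\|^2$. Each position $p$ lies in $W_i$ for at most $k$ indices $i$, and splitting at $p=\lceil(\sup S)/(K\rho^k)\rceil$ gives $\sum_{p\ge1}\min(K\rho^k,(\sup S)/p)^2=O\pare{(\sup S)K\rho^k}$. Combined with the Cauchy--Schwarz inequality $\pare{\sum_{p\in W_i}a_p}^2\le k\sum_{p\in W_i}a_p^2$, this yields $\|c\|^2\le 4k^4(\sup S)K^2\rho^k$ and $\|\tilde c\|^2\le 4k^2(\sup S)K^2\rho^k$ for $k$ large. Substituting these bounds, together with $\|\Delta\|<\infty$, into Proposition~\ref{lemma:KontRam} produces exactly the two stated inequalities. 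The only delicate point is that $c$ must be square-summable even though a coordinate $i$ arbitrarily far out can still influence $\varphi_{k,S}$ and $\varphi_{k,j,S}$ through words of very small measure; the key observation resolving this is that the influence of coordinate $i$ is bounded by (essentially) $k\min(\rho^k,1/i)$, which is square-summable, after which the remaining estimates are routine.
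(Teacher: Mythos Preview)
Your proof is correct and follows essentially the same route as the paper: verify a $c$-Lipschitz bound for each function, estimate $\|c\|^2$, and invoke Proposition~\ref{lemma:KontRam}. The only difference is cosmetic: the paper takes the cruder uniform choice $c_i=2k^2\min(K\rho^k,\sup S/i)$ (resp.\ $2k\min(K\rho^k,\sup S/i)$) over the whole window, whereas you keep the per-position bound $\min(K\rho^k,\sup S/p)$ for $p\in W_i$ and then collapse it via Cauchy--Schwarz and the observation that each $p$ lies in at most $k$ windows; both routes yield the same $\|c\|^2$ bounds, and your per-position estimate in fact sidesteps the slight imprecision in the paper's claim $\mu_k(w)\le \sup S/i_0$ (the relevant starting position can be as low as $i_0-k+1$).
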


\begin{proof}
 Define the sequence of random variables $X_i:\Omega^\mathbb N\to \Omega$ as the usual projection onto the $i$-th coordinate of $x\in \Omega^\mathbb N$. 
Fix $k\in \mathbb N$ and fix a set $S$  which is a finite union of bounded intervals. 
For $i_0\in \N$ and 
$x,x'\in \Omega^\N$ such that $x$ and $x'$ differ only in 
the $i_0$-th coordinate, we define the set 
\[
D_{k,i_0,S}(x,x')=\{w\in \Omega^k: M_k^x(w)(S)\neq M_k^{x'}(w)(S)\}.
\]
 Since $x$ and $x'$ only differ in the coordinate $i_0$,  a word  $w\in D_{k,i_0,S}(x,x')$  is one of the following:  $ x[i_0 -k + i,i_0 + i)$ or $ x'[i_0 -k + i,i_0 + i)$ for $i\in \mathbb N$ and $1\le i\le k$       (with the convention that if $i< 0$, the word $x[i,i+k)$ is the empty word). Thus, there are, at most, $2k$ words in $D_{k,i_0,S}(x,x')$. 
For each    $w\in D_{k,i_0,S}(x,x')$, 
 \[
 \mu_k(w)\le \sup S/i_0
 \]
 because the sum in the definition of $M_k^x(w)(S)$ runs over the indexes $i$ in 
 \[\mathcal{J}_{w,S} = \set{i \in \N : i \mu_k(w) \in S}.\] 
 On the other hand, the measure of every word $w\in \Omega^k$ is upper bounded by the contraction ratio~$\rho$, $0<\rho<1$,
 \[
 \mu_k(w)\le K\rho^k.
 \] 

\emph{Point {\em 1}.}
Let $\varphi_{k,S}:\Omega^\N\to \R$, $
\varphi_{k,S}(x)=\E_{\mu_k}[M_k^x(S)].$ 
We prove that there exists  $\pare{c_{i}}_{i\ge 1}$ in $\ell^2$ such that, for every $N$, 
$\varphi_{k,S}(x)$ is $c$-Lipchitz.
In fact, 
\[
\left|\varphi_{k,S}(x)-\varphi_{k,S}(x')\right|\le    \mu_k(D_{k,i_0,S}(x,x'))\sup_{w\in D_{k,i_0,S}(x,x')} 
\left|M_k^x(w)(S)-M_k^{x'}(w)(S)\right|.
\]
If $x$ and $x'$ differ only in coordinate $i_0$, for each $w\in \Omega^k$,   $M_k^x(w)(S)$ and $ M_k^{x'}(w)(S)$ differ, at most, in the values of the $k$ indicators $I_{i+i_0-k}(\cdot,w)$ for $1\le i\le k$. This implies that $|M_k^x(w)(S)- M_k^{x'}(w)(S)|\le k$.
Define $c=\pare{c_i}_{i\in \mathbb N}$,
\[c_i= 2 k^2\min(K\rho^k,
(\sup S)/i).
\]
Then, 
\[\left|\varphi_{k,S}(x)-\varphi_{k,S}(x')\right|\le    c_i, 
\]
which means that  $\varphi_{k,S}$ is  $c$-Lipchitz. 
Let's see that  $\pare{c_i}_{i\ge 1}$ is in $\ell^2$:
\begin{align*}
   \|c\|^2= \sum_{i\geq 1} 
    c_i^2 
    & = \sum_{i \leq (\sup{S} ) /(K\rho^k)} c_i^2
    + \sum_{i > (\sup{S} )  /(K\rho^k) } c_i^2 \\
    & \leq {4}k^2 \pare{
        K^2 \sum_{i \leq 
        	(\sup{S} ) /(K\rho^k)} \rho^{2k}
        + \sup{S}^2 \sum_{i > (\sup{S} )/(K\rho^k) } 1/i^2
        }
        \\
    & \leq  {4} k^2 (\sup {S}) 
    K^2 \rho^k .
\end{align*}
The  function $\varphi_{k,S}$ is in $L^1(\mu)$ because of Lemma \ref{Le:expprod} and thus it
satisfies the assumptions of Proposition~\ref{lemma:KontRam}.
 Then, 
\[
\mu(\{x\in \Omega^\mathbb N: \left|
\varphi_{k,S}(x)-\E_{\mu}[\varphi_{k,S}] 
\right|
\ge t \}
\le 2\exp\pare{{\frac{-t^2}{2\|\Delta\|^2 \|c\|^2} }}.
\]

\emph{Point {\em 2.}}
Fix $j\geq 0$.
Let $\varphi_{k,j,S} : \Omega^{\mathbb N} \to [0,1]$,
$
\varphi_{k,j,S} (x)=\mu_k(\{w\in\Omega^k: M_k^x(w)(S)=j\})
$.
Define  $c=\pare{c_i}_{i\in \mathbb N}$ as 
\[c_i= 2k \min(K\rho^k,\sup S/i).\]
With the same argument as in Point 1, 
$c=\pare{c_i}_{i\in \mathbb N}$ is in $\ell^2$ and for any pair $x,x'\in \Omega^{\mathbb N}$ which only differ in the $i$th-coordinate we have, for each $i\ge 1$,
\[\left|\varphi_{k,j,S}(x)-\varphi_{k,j,S}(x')\right|\le    \mu_k(D_{k,i,S}(x,x'))\le c_i.
\]
Hence, $\varphi_{k,j,S}$ is  $c$-Lipchitz. 
It is also bounded and hence it is in $L^{1}(\mu)$. The assumptions of Proposition~\ref{lemma:KontRam} are fulfilled.
 Then,
\[\mu(\{x\in \Omega^\mathbb N: \left|\varphi_{k,j,S}(x)-\E_{\mu}[\varphi_{k,j,S}] \right|\ge t \}\le 2\exp\pare{{\frac{-t^2}{2\|\Delta\|^2 \|c\|^2} }}.\]
\end{proof}

The next lemma gives the wanted quenched result: it proves that for $\mu$-almost all $x\in\Omega^\N$, 
 the sequence of random measures $(M^x_k(.))_{k \geq 1}$ on $\R^+$ converges in distribution to a Poisson point process on $\R^+$ as $k$ goes to infinity.
 
 \begin{lemma}[The quenched result]\label{prop:quenched}
 Let $\mu$ be an invariant and exponentially $\psi$-mixing measure on $\Omega^{\mathbb N}$. 
Then for $\mu$-almost all $x\in\Omega^\N$, 
 the sequence of random measures $(M^x_k(.))_{k \geq 1}$ on $\R^+$ converges in distribution to a Poisson point process on $\R^+$ as $k$ goes to infinity. \end{lemma}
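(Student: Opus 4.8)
The plan is to combine the annealed result (Lemma~\ref{lemma:annealed}) with the two concentration inequalities (Lemma~\ref{prop:concentration}) via a Borel--Cantelli argument, using Kallenberg's criterion (Proposition~\ref{kallenberg}) to pass from convergence along subsets $S$ that are finite unions of rational intervals to full convergence in distribution. Fix one such set $S$; since the class of finite unions of intervals with rational endpoints is countable, it suffices to prove that for $\mu$-almost every $x$ both conditions of Proposition~\ref{kallenberg} hold for $M_k^x(\cdot)$ with this fixed $S$, and then intersect the resulting full-measure sets over all such $S$.

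First I would handle condition (2), the more delicate one. Set $\varphi_{k,j,S}(x)=\mu_k(\{w\in\Omega^k:M_k^x(w)(S)=j\})$ for $j=0$, so that $\varphi_{k,0,S}(x)=\E_{\mu_k}[\mathbbm 1_{\{0\}}(M_k^x(S))]$. By Lemma~\ref{le:poisfi} applied with $h=\mathbbm 1_{\{0\}}$ we have $\E_\mu[\varphi_{k,0,S}]=P(|S|,\mathbbm 1_{\{0\}})+O(k\rho^k)=e^{-|S|}+O(k\rho^k)$. Point~2 of Lemma~\ref{prop:concentration} gives, for any $t>0$,
\[
\mu\pare{\abs{\varphi_{k,0,S}(x)-\E_\mu[\varphi_{k,0,S}]}\ge t}\le 2\exp\pare{\frac{-t^2}{\|\Delta\|^2\,8k^2(\sup S)K^2\rho^k}}.
\]
Choosing $t=t_k\to 0$ slowly (for instance $t_k=k\rho^{k/4}$, or any sequence with $t_k\to 0$ and $t_k^2/(k^2\rho^k)\to\infty$) makes the right-hand side summable in $k$, since $\rho<1$ forces $\rho^{-k}$ to dominate any polynomial. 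Borel--Cantelli then yields that for $\mu$-almost every $x$, $\abs{\varphi_{k,0,S}(x)-\E_\mu[\varphi_{k,0,S}]}\le t_k$ for all large $k$, hence $\varphi_{k,0,S}(x)\to e^{-|S|}=\prob(Y(S)=0)$. This is exactly condition~(2) of Proposition~\ref{kallenberg} for the process $M_k^x(\cdot)$.

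For condition~(1) I would use $\varphi_{k,S}(x)=\E_{\mu_k}[M_k^x(S)]$, which equals $\E_\mu[M_k^x(S)]$ when we average, and by Lemma~\ref{Le:expprod} satisfies $\E_\mu[\varphi_{k,S}]=\E_{\mu\times\mu_k}[M_k(S)]=|S|+O(\rho^k)$. Point~1 of Lemma~\ref{prop:concentration} gives a concentration bound with denominator $\|\Delta\|^2 8k^4(\sup S)K^2\rho^k$; again choosing $t_k\to 0$ with $t_k^2/(k^4\rho^k)\to\infty$ (e.g.\ $t_k=k^2\rho^{k/4}$) makes the tail probabilities summable, so by Borel--Cantelli $\varphi_{k,S}(x)\to|S|$ for $\mu$-almost every $x$. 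Since $\E[X_k^x(S)]=\varphi_{k,S}(x)$ in the notation of Proposition~\ref{kallenberg} (with $X_k^x=M_k^x$ over the probability space $(\Omega^k,\mu_k)$), this gives $\limsup_k\E[M_k^x(S)]=|S|=\E[Y(S)]$, which is condition~(1). Intersecting over all $j$ is unnecessary here since condition~(2) of Kallenberg only requires $j=0$; intersecting the two almost-sure events (for conditions~(1) and~(2)) over the countably many admissible $S$ produces a single full-measure set on which Proposition~\ref{kallenberg} applies for every such $S$, giving convergence in distribution of $M_k^x(\cdot)$ to the Poisson point process.

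The main obstacle is purely bookkeeping: making sure the averaged quantities $\E_\mu[\varphi_{k,S}]$ and $\E_\mu[\varphi_{k,0,S}]$ are correctly identified with the annealed expectations from Lemmas~\ref{Le:expprod} and~\ref{le:poisfi} (this is just Fubini, since $\E_\mu[\E_{\mu_k}[\,\cdot\,]]=\E_{\mu\times\mu_k}[\,\cdot\,]$), and choosing the threshold sequences $t_k$ so that simultaneously $t_k\to0$ and the Gaussian tails are summable --- both are automatic because $\rho^{-k}$ grows exponentially while the prefactors $k^2$ and $k^4$ are only polynomial, so any $t_k$ of the form $k^2\rho^{\theta k}$ with $0<\theta<1/2$ works. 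There is no real analytic difficulty beyond what is already packaged in the annealed lemma and the concentration lemma; the quenched statement is essentially the statement that "on average equals almost surely" once the variance-type control of Lemma~\ref{prop:concentration} is in hand.
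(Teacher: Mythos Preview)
Your proposal is correct and follows essentially the same route as the paper's proof: both verify the two conditions of Kallenberg's criterion for each fixed $S$ by combining the annealed computations (Lemmas~\ref{Le:expprod} and~\ref{le:poisfi}) with the concentration bounds of Lemma~\ref{prop:concentration} and a Borel--Cantelli argument, then intersect over the countable family of admissible $S$. The only cosmetic differences are the order in which the two conditions are treated and the particular choice of threshold sequence (the paper simply takes $t_k=1/k$, which already suffices since $1/(k^6\rho^k)\to\infty$).
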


\begin{proof}
 To prove it  we apply Kallenberg's result stated in Proposition~\ref{kallenberg} for the measure $\mu$. 
 Fix $S\subseteq \R^+$ a finite union of intervals with rational endpoints.
Let  $j\geq 0$ be an integer.
We need to show that for $\mu$-almost all $x\in\Omega^\N$,
 \begin{enumerate}
\item $ \limsup_{k\to \infty} \E_{\mu_k}[M_k^x(S)]\le |S|.$

\item
$\lim_{k\to\infty}
\mu_k(\{w\in\Omega^k: M_k^x(w)(S)=j\}) = p(|S|,j).$
\end{enumerate}
We start with Point 1. 
Let
 $\varphi_{k,S} : \Omega^{\mathbb N} \to \R$,
$
\varphi_{k,S}(x)=\E_{\mu_k}[M_k^x(S)]
$ 
By Point 1 of  Lemma~\ref{prop:concentration} 
we have, for any $t>0$,
\begin{align*}
    \mu\pare{\{ x\in \Omega^\N: \abs{\varphi_{k,S}(x) - \E_\mu [\varphi_{k,S}] } \geq t \}} 
    \leq 
    2\exp\pare{
\frac{-t^2}
    {\|\Delta\|^28k^4 (\sup S) K^2 \rho^k}}.
\end{align*}
Taking $t_k = 1/k$,
using the bound for $\Delta$ given in Lemma~\ref{lemma:normadelta},
and using that $1/(k^6\rho^k)$ goes to infinity when $k$ goes to infinity, this series converges 
\[
\sum_{k\geq 1} \mu(\abs{\varphi_{k,S} - \E_\mu [\varphi_{k,S}]} > t_k) < \infty.
\]
By the Borel--Cantelli lemma
the limsup event
\begin{align*} 
\set{\,  x\in\Omega^\N: \, \abs{\varphi_{k,S}(x) - 
\E_\mu [\varphi_{k,S}(x)]} > t_{k} \, \text{for infinitely many }k } 
\end{align*}
has $\mu$-probability  zero. Then, there exists a set of $\mu$-measure $1$ (which may depend on $S$) where the difference 
\[\varphi_{k,S}(x) - 
\E_\mu [\varphi_{k,S}(x)]\]
goes to zero as $k$ goes to infinity .   
Since 
\[
\E_\mu [\varphi_{k,S}(x)]=
\E_{\mu\times \mu_k}[M_k^x(S)]
\]
and,  by Lemma \ref{Le:expprod}, 
\[
\E_{\mu\times \mu_k}[M_k^x(S)]
=|S|+O(\rho^k),
\]
it follows that  
$\varphi_{k,S}(x)=\E_{\mu_k}[M_k^x(S)]$
 converges to 
 $|S| $
as $k$ goes to infinity in a set of $\mu$-measure~$1$. 
Hence, $\limsup_{k\to\infty} \E_{\mu_k}[M_k^x(S)]\leq |S| $.

\medskip

Point 2. By Lemma~\ref{lemma:annealed}, for every $S$ that is a finite union of intervals with rational endpoints and for every integer $j\geq 0$,
\[
\mu\times \mu_k\pare{(x,w)\in \Omega^\mathbb N \times \Omega^k: M_k(x,w)(S)=j}\to
p(|S|,j)
\text{ as}\ k\to \infty.
\]
 Using that for any Borel set $A\subseteq \R^+$ the equality $\E_\mu[\mu_k(A)]=\mu\times {\mu_k}(A)$ holds,
 we have 
\[
 \E_\mu \corch{  \mu^k\pare{w\in \Omega^k:\,M^{x}_k(S)(w)=j
}}\to
p(|S|,j)
\text{ as}\ k\to \infty.
\]
Let
 $\varphi_{k,j,S} : \Omega^{\mathbb N} \to (0,1)$,
$
\varphi_{k,j,S}(x)=\mu_k(\{w\in\Omega^k: M_k^x(w)(S)=j\}).
$
By Point 2 of  Lemma~\ref{prop:concentration},
\[    \mu\pare{\{ x\in \Omega^\N: \abs{\varphi_{k,j,S}(x) - \E_\mu [\varphi_{k,j,S}] } \geq t \}} 
    \leq 
    2\exp\pare{{\frac{-t^2}{ \|\Delta\|^2 8 k^2 (\sup {S})     K^2 \rho^k} }}.
\]
Taking $t_k = 1/k$ and using that $1/(k^4\rho^k)$ 
goes to infinity when $k$ goes to infinity, this series converges 
\[
\sum_{k\geq 1} \mu(\abs{\varphi_{k,j,S} - \E_\mu [\varphi_{k,j,S}]} > t_k) < \infty.
\]
By the Borel--Cantelli lemma
the limsup event
\begin{align*} 
\set{\,  x\in\Omega^\N: \, \abs{\varphi_{k,j,S}(x) - 
\E_\mu [\varphi_{k,j,S}(x)]} > t_{k} \, \text{for infinitely many }k } 
\end{align*}
has $\mu$-probability  zero. Then, there exists a set of $\mu$-measure 1 (which may depend on $S$) where the difference 
\[\varphi_{k,j,S}(x) - 
\E_\mu [\varphi_{k,j,S}(x)]\]
goes to zero as $k$ goes to infinity .   
Since $\E_\mu \corch{  \mu^k\pare{w\in \Omega^k:\,M^{x}_k(S)(w)=j
}}$ converges to $
p(|S|,j)$ as $k$ goes to infinity in a set of $\mu$-measure 1, for $\mu$-almost all $x\in \Omega^\mathbb N$, 
\[
\abs{\mu_k\pare{\{w\in \Omega^k\,:\,M^{x}_k(S)(w)=j\} }- p(|S|,j) } \to 0\quad \text{as } k\to \infty. 
\]

There are countably many sets  $S$ that are finite union of intervals with rational endpoints,  and there are countably many integer values $j\geq 0$.
Then there are just  a countable number of each such exceptional sets (because 
 their   countable union  has also $\mu$-measure zero.
Using Proposition~\ref{kallenberg} we conclude that
for $\mu$-almost every $x\in\Omega^\N$, 
$
M^{x}_k(.)$ converges in distribution to a Poisson point process on $\R^+$.
\end{proof}
Lemma \ref{prop:quenched} shows that $\mu$-almost all $x \in \Omega^\N$ are Poisson-generic.
Theorem \ref{thm:main} is proved.

\section*{Acknowledgements}
We are
grateful to Inés Armendariz and Frédéric Paccaut. 
This work is supported by  grants  
CONICET PIP 11220210100220CO,
UBACyT 20020220100065BA, PICTO UNGS-00001 (2021), and UNGS-IDH 30/3373. 
\medskip

\noindent
Nicol\'as Álvarez \\
 ICC \\
 Universidad de Buenos Aires \&  CONICET  Argentina\\  {\tt  nialvarez@dc.uba.ar}
  
\medskip

\noindent
Ver\'onica Becher \\
 Departamento de  Computaci\'on, Facultad de Ciencias Exactas y Naturales \& ICC  \\
 Universidad de Buenos Aires \&  CONICET  Argentina\\  {\tt  vbecher@dc.uba.ar}
\medskip

\noindent Eda Cesaratto\\
Universidad Nacional de Gral. Sarmiento \& CONICET Argentina\\ {\tt
ecesaratto@campus.ungs.edu.ar}
\medskip

\noindent
Martín Mereb \\
 Departamento de Matemática, Facultad de Ciencias Exactas y Naturales \& IMAS \\
 Universidad de Buenos Aires \&  CONICET Argentina\\  {\tt  mmereb@gmail.com}
 \medskip

 \noindent
 Yuval Peres\\
 Beijing Institute of Mathematical Sciences and Applications (BIMSA)
 \\
 {\tt yuval@yuvalperes.com }
 \medskip

\noindent
Benjamin Weiss
\\
Institute of Mathematics, Hebrew University of Jerusalem \\
Jerusalem, Israel
 \\
 {\tt benjamin.weiss1@mail.huji.ac.il}

\begin{thebibliography}{10}

\bibitem{Aan}
Jon Aaronson and Hitoshi Nakada.
\newblock On the mixinig coefficients of piecewise monotonic maps.
\newblock {\em Israel Journal o Mathematics}, 148:1--10, 2005.

\bibitem{Ab01}
Miguel Abadi.
\newblock Exponential approximation for hitting times in mixing processes.
\newblock {\em Mathematical Physics Electronic Journal}, 7:Paper No. 2, 19 p.,
  2001.

\bibitem{AbadiVergne}
Miguel Abadi and Nicolas Vergne.
\newblock Sharp error terms for return time statistics under mixing conditions.
\newblock {\em J. Theor. Probab.}, 22(1):18--37, 2009.

\bibitem{abm}
Nicolás Alvarez, Verónica Becher, and Martín Mereb.
\newblock Poisson generic sequences.
\newblock {\em International Mathematics Research Notices}, rnac234, 2022.

\bibitem{BaBe06}
Guy Barat, Val\'{e}rie Berth\'{e}, Pierre Liardet, and J\"{o}rg Thuswaldner.
\newblock Dynamical directions in numeration.
\newblock {\em Ann. Inst. Fourier (Grenoble)}, 56(7):1987--2092, 2006.

\bibitem{BH}
Ver\'onica Becher and Gabriel~Sac Himelfarb.
\newblock Construction of a $\lambda$-{P}oisson generic sequence.
\newblock {\em Mathematics of Computation}, 92:1453--1466, 2023.

\bibitem{BeLee23}
Val\'{e}rie Berth\'{e} and Jungwon Lee.
\newblock Dynamics of ostrowski skew-product: 1. limit laws and hausdorff
  dimensions.
\newblock {\em Trans. Amer. Math. Soc.}, 376:7947--7982, 2023.

\bibitem{Chen}
Louis H.~Y. Chen.
\newblock {Poisson Approximation for Dependent Trials}.
\newblock {\em The Annals of Probability}, 3(3):534 -- 545, 1975.

\bibitem{Doeblin1940}
Wolfgang Doeblin.
\newblock Remarques sur la th{\'e}orie m{\'e}trique des fractions continues.
\newblock {\em Compos. Math.}, 7:353--371, 1940.

\bibitem{Ghosh}
Anish Ghosh, Maxim S{\o}lund~Kirsebom, and Parthanil Roy.
\newblock Continued fractions, the {Chen}-{Stein} method and extreme value
  theory.
\newblock {\em Ergodic Theory Dyn. Syst.}, 41(2):461--470, 2021.

\bibitem{HSV99}
Masaki Hirata, Beno{\^i}t Saussol, and Sandro Vaienti.
\newblock Statistics of return times: A general framework and new applications.
\newblock {\em Communications in Mathematical Physics}, 206:33--55, 1999.

\bibitem{Iosifescu-Doeblin}
Marius Iosifescu.
\newblock Doeblin and the metric theory of continued fractions: {A} functional
  theoretic solution to {Gauss}' 1812 problem.
\newblock In {\em Doeblin and modern probability. Proceedings of the Doeblin
  conference '50 years after Doeblin: development in the theory of Markov
  chains, Markov processes, and sums of random variables' held November 2-7,
  1991 at the university of T\"ubingen's Heinrich Fabri Institut, Blaubeuren,
  Germany}, pages 97--110. Providence, RI: American Mathematical Society, 1993.

\bibitem{IosiKraa}
Marius Iosifescu and Cor Kraaikamp.
\newblock {\em Metrical Theory of Continued Fractions}.
\newblock Mathematics and Its Applications. Springer Netherlands, 2002.

\bibitem{kallenberg2017random}
Olav Kallenberg.
\newblock {\em Random measures, theory and applications}, volume~77 of {\em
  Probability Theory and Stochastic Modelling}.
\newblock Springer, Cham, 2017.

\bibitem{kingman}
J.~F.~C. Kingman.
\newblock {\em Poisson processes}, volume~3 of {\em Oxford Studies in
  Probability}.
\newblock The Clarendon Press, Oxford University Press, New York, 1993.
\newblock Oxford Science Publications.

\bibitem{KontoThesis}
Leonid Kontorovich.
\newblock {\em Measure concentration of strongly mixing processes with
  applications}.
\newblock ProQuest LLC, Ann Arbor, MI, 2007.
\newblock Thesis (Ph.D.)--Carnegie Mellon University.

\bibitem{KontorovichRamanan2008}
Leonid~(Aryeh) Kontorovich and Kavita Ramanan.
\newblock {Concentration inequalities for dependent random variables via the
  martingale method}.
\newblock {\em The Annals of Probability}, 36(6):2126 -- 2158, 2008.

\bibitem{last}
G\"{u}nter Last and Mathew Penrose.
\newblock {\em Lectures on the {P}oisson process}, volume~7 of {\em Institute
  of Mathematical Statistics Textbooks}.
\newblock Cambridge University Press, Cambridge, 2018.

\bibitem{Ledoux}
Michel Ledoux.
\newblock {\em The concentration of measure phenomenon}, volume~89 of {\em
  Mathematical Surveys and Monographs}.
\newblock American Mathematical Society, Providence, RI, 2001.

\bibitem{weiss2020}
Benjamin Weiss.
\newblock Poisson generic points, 23-27 November 2020.
\newblock Jean-Morlet Chair conference on Diophantine Problems, Determinism and
  Randomness. Centre International de Rencontres Mathématiques. Audio-visual
  resource: doi:10.24350/CIRM.V.19690103.

\bibitem{Zwei}
Roland Zweim{\"u}ller.
\newblock Hitting times and positions in rare events.
\newblock {\em Annales Henri Lebesgue}, 5:1361--1415, 2022.

\end{thebibliography}
 \end{document}